\begin{document}
\newtheorem{definition}{Definition}[section] 
\newtheorem{theorem}[definition]{Theorem}
\newtheorem{lemma}[definition]{Lemma}
\newtheorem{proposition}[definition]{Proposition}
\newtheorem{examples}[definition]{Examples}
\newtheorem{corollary}[definition]{Corollary}
\def\square{\Box}
\newtheorem{remark}[definition]{Remark}
\newtheorem{remarks}[definition]{Remarks}
\newtheorem{exercise}[definition]{Exercise}
\newtheorem{example}[definition]{Example}
\newtheorem{observation}[definition]{Observation}
\newtheorem{observations}[definition]{Observations}
\newtheorem{algorithm}[definition]{Procedure}
\newtheorem{criterion}[definition]{Criterion}
\newtheorem{algcrit}[definition]{Algorithm and criterion}

\newenvironment{prf}[1]{\trivlist
\item[\hskip \labelsep{\it
#1.\hspace*{.3em}}]}{~\hspace{\fill}~$\square$\endtrivlist}
\newenvironment{proof}{\begin{prf}{Proof}}{\end{prf}}

\title{Linear differential equations with finite differential Galois group}
\author{M. van der Put, C. Sanabria Malag\'on, J. Top}
\date{\today}
\maketitle

\begin{abstract} For a differential operator $L$ of order $n$ over $C(z)$ with a finite (differential) Galois group 
$G\subset {\rm GL}(C^n)$, there is an algorithm, by M. van Hoeij and J.-A.~Weil, which computes the associated evaluation of the invariants $ev:C[X_1,\dots ,X_n]^G\rightarrow C(z)$.

The procedure proposed here does the opposite: it uses a theorem of E.~Compoint and computes the
operator $L$ from a given evaluation $h$. Moreover it solves a part of the inverse problem of producing $L$ for a given representation of a finite group $G$. 
Another part considered here, is finding irreducible $G$-invariant curves $Z\subset \mathbb{P}(C^n)$ with $Z/G$ of genus zero and constructing evaluations from this.  
The theory developed here is illustrated by various examples, and relates to and continues classical work of H.A.~Schwarz,  G.~Fano, F.~Klein and  
A.~Hurwitz.
 \footnote{ key words: differential Galois theory, inverse problem, invariant curves, Schwarz maps, evaluation of invariants}  \end{abstract}

\section{\rm Introduction}  Let $k$ denote the differential field $C(z)$ with derivation $\frac{d}{dz}$  where $C$ is an algebraically closed field 
 of characteristic zero. The algebraic closure of $k$ is denoted by $\overline{k}$ and we write $\pi$ for the Galois group of $\overline{k}/k$.  

The category ${\rm Diff}_{\overline{k}/k}$ that we study
here, has as objects the finite dimensional differential modules $M$ over $k$ which become trivial over the field $\overline{k}$. This condition on $M$ is equivalent to
$M$ having a finite differential Galois group. The morphisms in this category are the $k$-linear maps that commute with differentiation.

 Let $Repr_\pi$ denote the category of the (continuous) representations of
$\pi$ on finite dimensional $C$-vector spaces. The functor ${\rm Diff}_{\overline{k}/k}\rightarrow Repr_\pi$, which associates to a differential module
$M$ its solution space $\ker (\partial ,\overline{k}\otimes_k M)$, is known to be an equivalence of (Tannakian) categories. It is also known
that every finite group $G$ is a continuous image of $\pi$. \\

The aim of this paper is to make this equivalence of categories explicit for special cases. There are two directions to consider:\\
(i)  Compute a differential operator connected to  a given representation of a given finite group and some additional data.\\
(ii)  Construct the Picard--Vessiot field for a given module $M\in {\rm Diff}_{\overline{k}/k}$, when $M$ is represented by a differential operator $L$.\\

For order $n=2$, the Schwarz' list (compare \cite{vdP-U} for a modern version) and Klein's theorem (e.g., discussed
in  \cite{B-D} and in \cite{Ber})  are classical results for (i). J.~Kovacic' paper \cite{Ko} initiated (ii). \\
In the case $n=3$ 
Hurwitz' paper \cite{H}  produces examples for (i). This method was refined in \cite{vdP-U}.  Klein's theorem is generalized in, e.g.,\  \cite{Ber, S1,S2,S3}.  For (ii) there are many papers \cite{S-U,Ho, H-W}. \\
If $n$ is general, work of Compoint and Singer \cite{C, C-S} considers (ii). Not much seems to have been done concerning (i).\\

 The present paper is mainly concerned with (i). It clarifies and extends work of \cite{Ber, S1,S2,S3}.
 Moreover we propose new methods and examples for (i) related to a theorem of Compoint \cite{C, B} concerning invariants.

The paper is organized as follows.\\
Section~\ref{two} introduces Picard-Vessiot curves, evaluation of invariants, Fano curves, and Schwarz maps.\\
Section~\ref{three} constructs, given a finite irreducible subgroup $G\subset \mbox{GL}(C^n)$ and a $G$-invariant
curve  $Z\subset \mathbb{P}(C^n)$, a `standard' differential operator
over $C(z)$ of order $n$ with projective Galois group equal to the image of $G$ in $\mbox{PGL}(C^n)$ and Fano curve $Z$.\\
Section~\ref{four} gives a formulation of Compoint's theorem and uses this to describe an algorithm computing
a standard differential operator.\\
Section~\ref{five} contains a proof showing that for $n=2$ our algorithm results in the known standard operators.
Moreover it illustrates the theory by providing various examples such as the subgroups $G_{168}$, $H_{72}$, and $A_5$ 
 of $\mbox{GL}(C^3)$.

\section{ \rm Objects associated to a differential operator $L$ over $k=C(z)$ with finite differential Galois group}\label{two}

$L$ has the form $d_z^n+a_{n-1}d_z^{n-1}+\cdots +a_0$ with all $a_i\in {C}(z)$, $d_z=\frac{d}{dz}$ and all solutions are
supposed to be algebraic over ${C}(z)$. Associated to  $L$ is:\\

\noindent (1) {\it The  Picard--Vessiot field} $K\supset {C}(z)$ with its Galois group $G$.\\

\noindent (2) {\it The (contravariant)  solution space} $V\subset K$ of $L$ with the action of $G$ on
it; the image of $G\subset {\rm GL}(V)$ into ${\rm PGL}(V)$ will be denoted by $G^p$.\\

\noindent (3) {\it The  Picard--Vessiot curve}  $X_{pv}$ (over ${C}$) with function field $K$ provided
with the action of $G$ and an isomorphism $X_{pv}/G\cong \mathbb{P}^1_z$.\\
 By $\mathbb{P}^1_z$ we denote the projective line with function field ${C}(z)$.\\

\noindent (4)  {\it Evaluation of the invariants}.  One considers a $C$-linear homomorphism $\phi :C[X_1,\dots ,X_n]\rightarrow K$ which sends the 
variables $X_1,\dots ,X_n$ to a basis of $V$. The $C$-linear action of $G$ on $C[X_1,\dots ,X_n]$
is defined by the $G$-invariance of $CX_1+\cdots +CX_n$ and the $G$-equivariance of $\phi$.  
This makes $G$ into a subgroup of ${\rm GL}(n,C)$.  The homomorphism $\phi$ induces a homomorphism 
$ev: C[X_1,\dots ,X_n]^G\rightarrow K^G=C(z)$ which we will call the {\it evaluation  of the invariants}.
In particular, $C[X_1,\dots ,X_n]^G=C[f_1,\dots ,f_N]$ where $f_1,\dots ,f_N$ are homogeneous generators
and $ev$ maps each $f_i$ to an element in $C(z)$. \\

  Now suppose that the action of $G$ on $V$ is known and  is  irreducible. If we define the action of $G$ on $CX_1+\cdots +CX_n$
   such that an equivariant $\phi$ with $\phi (CX_1+\cdots +CX_n)=V$  exists, then this $\phi$ is unique up to multiplication by a scalar
    $c \in C^*$.  As a consequence, {\it the evaluation map is
   unique} up to changing each $ev(f_i)$ into $c ^{\deg f_i}ev(f_i)$.\\

\noindent (5) {\it The  Fano curve}. \\
 $H\subset \ker (\phi )$, the homogeneous kernel, is the ideal generated generated by the homogeneous elements in $\ker( \phi )$.
 For $n=2$ one has $H=0$. For notational reasons we will call $\mathbb{P}(V)=\mathbb{P}^1$ itself {\it the Fano curve} in this case.\\
 
 Suppose that $n>2$, then $H$ defines an irreducible curve in $\mathbb{P}^{n-1}$, invariant under the action of $G$.
Indeed, $H$ is the homogeneous ideal induced by the kernel $J$ of the corresponding homomorphism
 ${C}[\frac{X_2}{X_1},\dots ,\frac{X_n}{X_1}]\rightarrow
 K$. It is a curve since  $K/{C}$ has transcendence degree 1. The curve in
 $\mathbb{P}^{n-1}$ defined by $H$ will be denoted by $X_{fano}$ and
 will be called {\it the Fano curve}. This curve was indeed considered by
 Fano in his 1900-paper \cite{Fa}.\\

\noindent (6) {\it The  Schwarz map}. The homomorphism  ${C}[X_1,\dots ,X_n]/H\rightarrow K$ induces
a morphism of curves $Schw: X_{pv}\rightarrow X_{fano}$ which is $G$-equivariant. After dividing
by $G$ we obtain a multivalued map $\mathbb{P}^1_z=X_{pv}/G\dots
\rightarrow X_{fano}$.  Now $Schw$ is in fact {\it the Schwarz map}, known from the case $n=2$.

Dividing $Schw$ by $G$ we obtain $qSchw: \mathbb{P}^1_z=X_{pv}/G\rightarrow
X_{fano}/G^p$ which can be called {\it the quotient Schwarz map}.
 
The relation between $X_{pv}$ and $X_{fano}$ is in general not
obvious.  We will prove the following statement:

\begin{lemma}
 Suppose that  $qSchw: \mathbb{P}^1_z=X_{pv}/G\rightarrow
X_{fano}/G^p$ is birational. The subgroup $c(G)$ of $G\subset
{\rm GL}(V)$ consists of multiples of the identity. Then
$c(G)$ acts trivially on the curve $X_{fano}$ and therefore
$X_{pv}\rightarrow X_{fano}$ factors over $X_{pv}/c(G)$. 
 The morphism $X_{pv}/c(G)\rightarrow X_{fano}$ is birational. \end{lemma}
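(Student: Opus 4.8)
The plan is to treat the two assertions in turn: first that $c(G)$ acts trivially on $X_{fano}$ (which is essentially formal), and then that $X_{pv}/c(G)\to X_{fano}$ is birational (where the real work lies). By definition $c(G)$ is the subgroup of scalar matrices contained in $G\subset {\rm GL}(V)$, and this is exactly the kernel of $G\to {\rm PGL}(V)$, so that $G/c(G)\cong G^p$. Since $X_{fano}\subset \mathbb{P}(V)$ and $G$ acts on it only through its image $G^p$ in ${\rm PGL}(V)$, every scalar matrix acts as the identity on $\mathbb{P}(V)$ and hence fixes $X_{fano}$ pointwise; this is the first assertion. Because $Schw$ is $G$-equivariant and $c(G)$ acts trivially on the target, $Schw$ is constant on $c(G)$-orbits and therefore factors as $X_{pv}\to X_{pv}/c(G)\xrightarrow{f}X_{fano}$; the induced map $f$ is $G^p$-equivariant and, corresponding to the inclusion of function fields $C(X_{fano})\hookrightarrow K^{c(G)}$, is dominant.

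For the birationality I would argue entirely with function fields and Galois theory. Put $E=C(X_{pv}/c(G))=K^{c(G)}$ and $k_0=C(z)=K^G$. Since $c(G)$ is normal in $G$ with quotient $G^p$, the fundamental theorem of Galois theory gives that $E/k_0$ is Galois with group $G^p$; in particular $[E:k_0]=|G^p|$ and $G^p$ acts faithfully on $X_{pv}/c(G)$, with $E^{G^p}=k_0$. Let $F=f^*(C(X_{fano}))\subset E$, a $G^p$-stable subfield whose fixed field $F^{G^p}=C(X_{fano}/G^p)$ is contained in $E^{G^p}=k_0$. Comparing the two towers $F^{G^p}\subseteq F\subseteq E$ and $F^{G^p}\subseteq k_0\subseteq E$ via multiplicativity of degrees yields $[E:F]\,[F:F^{G^p}]=[E:F^{G^p}]=[E:k_0]\,[k_0:F^{G^p}]$.

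The crux, and the step I expect to cost the most, is showing $[F:F^{G^p}]=|G^p|$, equivalently that $G^p$ acts faithfully on $X_{fano}$. For this I would use that the Fano curve is nondegenerate: the images $\phi(X_1),\dots,\phi(X_n)$ form a $C$-basis of $V$ and are thus linearly independent in $K$, so $H=\ker\phi$ contains no nonzero linear form and $X_{fano}$ lies in no hyperplane of $\mathbb{P}(V)$. If some $g\in G^p$ fixed $X_{fano}$ pointwise, a lift $\tilde g\in {\rm GL}(V)$ would have every point of $X_{fano}$ as an eigenvector; as the eigenspaces of $\tilde g$ are finitely many linear subspaces and $X_{fano}$ is irreducible, $X_{fano}$ would be contained in a single $\mathbb{P}(V_\lambda)$, forcing $V_\lambda=V$ by nondegeneracy and hence $\tilde g$ scalar, i.e.\ $g=1$. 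Granting this faithfulness, $[F:F^{G^p}]=|G^p|=[E:k_0]$, so the displayed identity collapses to $[E:F]=[k_0:F^{G^p}]$; and the hypothesis that $qSchw$ is birational says precisely $F^{G^p}=k_0$, whence $[E:F]=1$ and $f$ is birational.
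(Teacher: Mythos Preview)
Your proof is correct and proceeds along the same function-field/Galois-theoretic line as the paper. The paper's argument is much terser: it asserts directly that $K^{c(G)}=C(z)[\frac{x_2}{x_1},\dots ,\frac{x_n}{x_1}]$ and, using the hypothesis to place $C(z)$ inside $C(X_{fano})=C(\frac{x_2}{x_1},\dots ,\frac{x_n}{x_1})$, concludes $K^{c(G)}=C(X_{fano})$. The unproved equality $K^{c(G)}=C(z)[\frac{x_2}{x_1},\dots ,\frac{x_n}{x_1}]$ amounts precisely to the statement that no element of $G$ outside $c(G)$ fixes all the ratios $\frac{x_j}{x_1}$, i.e.\ that $G^p$ acts faithfully on $X_{fano}$; your nondegeneracy/eigenspace argument supplies exactly this missing justification, and your degree-count then replaces the paper's direct identification of the two fields. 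So the two proofs are the same in substance, with yours filling in the step the paper leaves implicit.
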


\begin{proof} Let $x_1,\dots ,x_n\in K$ be a basis of $V\subset
K$. Then $K={C}(z)[x_1,\dots ,x_n]$, 
$K^{c(G)}={C}(z)[\frac{x_2}{x_1},\dots ,\frac{x_n}{x_1}]$ 
and ${C}(X_{fano})$ is the field of fractions of the algebra 
  ${C}[\frac{x_2}{x_1},\dots ,\frac{x_n}{x_1}]$.
Since $C(z)={C}(X_{fano})$ one has
$K^{c(G)}={C}(X_{fano})$.\end{proof}

\begin{remark}{\rm $\ $ \\
 In \S 3 we define a {\it standard operator} $L_{st}$  over $k=C(z)$ associated to a finite group
$G\subset {\rm GL}(C^n)$ and a $G$-invariant irreducible curve $X\subset \mathbb{P}(C^n)$ such that (the normalisation of) $X/G$ has genus zero. 
This operator $L_{st}$ has  the property that $X_{fano}\simeq X$ (i.e., $X$ and $X_{fano}$ are birational) and that 
$qSchw: \mathbb{P}^1_z\rightarrow X_{fano}/G^p$ is birational.

 For a general linear differential equation $L$ with differential Galois group $G$ and $X_{fano}\simeq X$, the normalization of 
 $X_{fano}/G^p$ is $\mathbb{P}^1_s$ and $L$ is obtained from a standard operator defined in terms of $s$ by a weak pullback 
  (see \S 3 for the definition) along the morphism $\mathbb{P}^1_z\rightarrow \mathbb{P}^1_s$. 


 
 }\end{remark}

\section{\rm Construction of a standard operator and pullbacks}\label{three}

{\it The given data are}:\\
 A vector space $V$ over $C$ of dimension $n\geq 2$ and an {\it irreducible} finite group $G\subset {\rm GL}(V)$.
The image of $G$ into ${\rm PGL}(V)$ is called $G^p$.  \\

\noindent (i) {\bf For $n=2$},  there are no additional data. For notational reasons, one writes $Z=\mathbb{P}(V)$. Further $Z/G^p$ is identified
with $\mathbb{P}^1_z$ and so $C(Z/G^p)=C(z)$. One knows that $G^p\in \{D_n,A_4,S_4,A_5\}$ and the variable $z$ is chosen such that $z=0,1,\infty$ are the branch of $Z\rightarrow Z/G^p$.\\

\noindent (ii) {\bf For $n>2$}, the additional data are a $G^p$-invariant, irreducible curve $Z\subset \mathbb{P}(V)$ such that the normalisation of $Z/G^p$ has genus 0. The function field $C(Z/G^p)$ of $Z/G^p$ is identified with $C(z)$.\\

\begin{observation} {\rm The standard operator that we will construct depends only on $G^p$ and $Z$. Therefore we may suppose
that $G$ lies in ${\rm SL}(V)$ and maps surjectively to $G^p$ and that no proper subgroup of $G$ has this property. 
In particular,
the kernel of $G\rightarrow G^p$ has the form $\{\lambda \cdot {\bf 1} |\ \lambda ^m=1\}$ 
for a certain divisor $m$ of $n$.} \hfill $\square$ \end{observation}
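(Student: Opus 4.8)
The three sentences form a single chain: the first licenses the normalization, the second carries it out, and the third records the resulting kernel. The plan is to treat them in that order, taking the first as a property of the construction of Section~\ref{three} --- whose only inputs are the projective group $G^p$ and the curve $Z\subset\mathbb{P}(V)$, so that no feature of a particular linear lift $G\subset\mathrm{GL}(V)$ of $G^p$ enters. Granting this, I am free to replace the given $G$ by any finite subgroup of $\mathrm{GL}(V)$ whose image in $\mathrm{PGL}(V)$ is $G^p$, and I will use this freedom to move inside $\mathrm{SL}(V)$.

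To produce a lift inside $\mathrm{SL}(V)$, I would let $\hat{G}\subset\mathrm{GL}(V)$ be the full preimage of $G^p$ under $\mathrm{GL}(V)\to\mathrm{PGL}(V)$ and set $G_0=\hat{G}\cap\mathrm{SL}(V)$. The key point to check is that $G_0$ still surjects onto $G^p$: given $g^p\in G^p$, pick any lift $g\in\mathrm{GL}(V)$ and put $d=\det g$; since $C$ is algebraically closed there is $\lambda\in C^*$ with $\lambda^n=d^{-1}$, whence $\det(\lambda g)=1$, so $\lambda g\in\mathrm{SL}(V)$ and $\lambda g$ still maps to $g^p$. The kernel of $G_0\to G^p$ is $G_0\cap(\text{scalars})=\{\lambda\cdot\mathbf{1}:\lambda^n=1\}$, a cyclic group of order $n$, so $G_0$ is finite of order $n\cdot|G^p|$.

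Next I would observe that every finite subgroup of $\mathrm{SL}(V)$ surjecting onto $G^p$ is automatically contained in $G_0$, since its image in $\mathrm{PGL}(V)$ is $G^p$ and it lies in $\mathrm{SL}(V)$. Hence it suffices to take for $G$ a subgroup of $G_0$ of smallest order that still surjects onto $G^p$; minimality of the order then forces that no proper subgroup of $G$ surjects onto $G^p$, which is the claimed property. For the final clause, the kernel $N$ of $G\to G^p$ equals $G\cap(\text{scalars})$ and hence is a subgroup of the cyclic group $\{\lambda\cdot\mathbf{1}:\lambda^n=1\}$; every such subgroup has the form $\{\lambda\cdot\mathbf{1}:\lambda^m=1\}$ for a unique divisor $m$ of $n$, which is exactly the asserted shape.

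I expect the only step needing genuine care to be the surjectivity of $G_0\to G^p$, where algebraic closedness of $C$ is used to extract an $n$-th root and correct the determinant; everything else is the elementary subgroup structure of a finite cyclic group. I would also note --- although it is not part of the stated conclusion, and is used implicitly in what follows --- that this replacement preserves irreducibility, because a subspace of $V$ is $G$-invariant if and only if it is invariant under the image $G^p$ in $\mathrm{PGL}(V)$, a condition depending only on $G^p$.
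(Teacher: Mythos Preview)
Your argument is correct. The paper itself offers no proof of this Observation: it is stated and closed with a $\square$, functioning as a normalization remark rather than a proved lemma. You have supplied the details the paper leaves implicit, and each step is sound --- in particular the use of algebraic closedness of $C$ to adjust any lift into $\mathrm{SL}(V)$ via an $n$th root of the determinant, and the identification of the kernel as a subgroup of the cyclic group $\mu_n\cdot\mathbf{1}$. Your closing remark on irreducibility (that invariance of a subspace depends only on $G^p$) is also correct and worth recording, since the paper tacitly relies on it.
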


The above data yield inclusions $C(z)=C(Z)^{G^p}\subset C(Z)\subset \overline{C(z)}$, where $z$ is unique up to a fractional linear transformation
and the embedding $C(Z)\subset \overline{C(z)}$ is unique up to an automorphism of $C(Z)$, i.e., an element of $G^p$.

We would like to identify $V$ with the solution
space in $\overline{C(z)}$ of the standard operator to be constructed. However, $V$ does not lie in $C(Z)$. 

One chooses $\ell \in V,\ \ell \neq 0$, then $\frac{V}{\ell}$ belongs to $C(Z)$. The vector space $\frac{V}{\ell}$ is not invariant under $G^p$, 
or what is the same, it is not invariant under $\pi$.\\

\begin{lemma} There exists an element $f\in \overline{C(z)}^*$ such that $f\frac{V}{\ell}$ is invariant under $\pi$.
The canonical map $\mathbb{P}(V)\rightarrow \mathbb{P}(f\frac{V}{\ell})$, given by $v\mapsto f\cdot \frac{v}{\ell}$ is equivariant
for the action of $\pi$.  \end{lemma}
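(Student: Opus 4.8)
The plan is to recognize the desired $f$ as the solution of a Galois-descent problem and to produce it from Hilbert's Theorem~90 once the relevant obstruction is shown to vanish. First I would make the failure of $\pi$-invariance of $\frac V\ell$ completely explicit. Since $Z\subset\mathbb P(V)$ is $G^p$-stable and $V$ is a $G$-stable space of linear forms, the induced action of $\pi$ on $C(Z)$, which factors through $G^p=\mathrm{Gal}(C(Z)/C(z))$, sends a ratio $\frac v\ell$ to $\frac{g_\sigma v}{g_\sigma\ell}$, where $g_\sigma\in G$ is any lift of the image $\bar\sigma\in G^p$ of $\sigma$. Because $gV=V$ for $g\in G$, this yields
\[
\sigma\Big(\frac V\ell\Big)=\frac{V}{g_\sigma\ell}=\lambda_\sigma\cdot\frac V\ell,\qquad \lambda_\sigma:=\frac{\ell}{g_\sigma\ell}\in C(Z)^*\subset\overline{C(z)}^*,
\]
so the multiplier $\lambda_\sigma$ measures exactly the defect. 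As $\frac V\ell$ is $C$-stable, $\lambda_\sigma$ is well defined only modulo $C^*$; accordingly I would read it as a class $\bar\lambda_\sigma\in\overline{C(z)}^*/C^*$.

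The second step is to check that $\sigma\mapsto\bar\lambda_\sigma$ is a continuous $1$-cocycle for $\pi$ acting on $\overline{C(z)}^*/C^*$: applying $\sigma$ to $\tau(\frac V\ell)=\lambda_\tau\frac V\ell$ and comparing with $(\sigma\tau)(\frac V\ell)=\lambda_{\sigma\tau}\frac V\ell$ gives $\lambda_{\sigma\tau}\equiv\lambda_\sigma\,\sigma(\lambda_\tau)\pmod{C^*}$. Producing $f$ with $f\frac V\ell$ being $\pi$-invariant is then precisely the statement that $\bar\lambda$ is a coboundary, i.e.\ that its class vanishes in $H^1(\pi,\overline{C(z)}^*/C^*)$.

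To kill this class I would use the exact sequence $1\to C^*\to\overline{C(z)}^*\to\overline{C(z)}^*/C^*\to1$. Hilbert's Theorem~90 gives $H^1(\pi,\overline{C(z)}^*)=0$, so the connecting map embeds $H^1(\pi,\overline{C(z)}^*/C^*)$ into $H^2(\pi,C^*)$, and a direct computation identifies the image of $[\bar\lambda]$ with the $\mu_m$-valued $2$-cocycle $(\sigma,\tau)\mapsto c(\sigma,\tau)^{-1}$ coming from the central extension $1\to\mu_m\to G\to G^p\to1$, where $g_\sigma g_\tau=c(\sigma,\tau)g_{\sigma\tau}$. The point I expect to be the crux is that this obstruction nonetheless vanishes: since $C$ is algebraically closed of characteristic zero, $C(z)$ is a $C_1$-field by Tsen's theorem and therefore has cohomological dimension $\le1$, so $H^2(\pi,\mu_m)=0$ (equivalently $\mathrm{Br}(C(z))=0$). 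Hence $[\bar\lambda]=0$, which produces the required $f\in\overline{C(z)}^*$. This is also where the twist genuinely leaves $C(Z)$: the analogous class over the finite group $G^p$ need not vanish precisely because the extension $G\to G^p$ may be non-split, which is why $f$ is only asserted to lie in $\overline{C(z)}$.

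Finally, the equivariance assertion is immediate once $f$ exists. For $\sigma\in\pi$ and $[v]\in\mathbb P(V)$ one computes $\sigma\big(f\tfrac v\ell\big)=\sigma(f)\,\tfrac{g_\sigma v}{g_\sigma\ell}=\big(\sigma(f)\lambda_\sigma f^{-1}\big)\cdot f\tfrac{g_\sigma v}{\ell}$, and the scalar $\sigma(f)\lambda_\sigma f^{-1}\in C^*$ by the choice of $f$; thus $v\mapsto f\frac v\ell$ carries the $G^p$-action on $\mathbb P(V)$ to the natural ($C$-linear) $\pi$-action on $\mathbb P(f\frac V\ell)$, as claimed.
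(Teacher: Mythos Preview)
Your argument is correct, but it takes a different route from the paper. The paper avoids the Brauer-group/Tsen machinery entirely: it raises the multiplier $\frac{\ell}{\tilde\sigma\ell}$ to the $m$th power (where $\ker(G\to G^p)=\mu_m$), thereby obtaining an honest $1$-cocycle of the \emph{finite} group $G^p$ with values in $C(Z)^*$; ordinary Hilbert~90 for $C(Z)/C(z)$ produces $f\in C(Z)^*$ with $\frac{\sigma f}{f}=(\frac{\tilde\sigma\ell}{\ell})^m$, and one then takes an $m$th root $f_m$ of $f$ in $\overline{C(z)}$. The paper further uses the minimality assumption of Observation~3.1 to show that $T^m-f$ is irreducible over $C(Z)$, so that $f_m$ lives in a specific degree-$m$ extension of $C(Z)$.

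Your approach works directly with $\pi$ and the quotient $\overline{C(z)}^*/C^*$, identifies the obstruction in $H^2(\pi,\mu_m)$ (equivalently in $\mathrm{Br}(C(z))$), and kills it with Tsen's theorem. This is cleaner conceptually and does not need the minimality hypothesis on $G$ at all; on the other hand, the paper's hands-on argument is more elementary and, importantly for the sequel, pinpoints exactly which finite extension of $C(Z)$ contains $f$ --- information that feeds into the explicit construction of $L_{st}$ and the Picard--Vessiot field. Your remark that ``the analogous class over the finite group $G^p$ need not vanish'' is exactly the reason the paper passes to the $m$th root.
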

\begin{proof} The group $G$ is supposed to have the properties of Observation 3.1. For each $\sigma \in G^p$, one denotes by
$\tilde{\sigma}$ an element in $G$ with image $\sigma$. Now $\sigma (\frac{V}{\ell })=\frac{V}{\tilde{\sigma}\ell}=
\frac{\ell}{\tilde{\sigma}\ell}\cdot \frac{V}{\ell}$. The term $\frac{\ell}{\tilde{\sigma}\ell}$ depends in general on the choice of 
$\tilde{\sigma}$. But $(\frac{\ell}{\tilde{\sigma}\ell})^m$ depends only on $\sigma$ and 
$\sigma \mapsto (\frac{\ell}{\tilde{\sigma}\ell})^m$ is a 1-cocycle. By Hilbert 90, there is an element $f\in C(Z)$ such that
$\frac{\sigma f}{f}\cdot (\frac{\ell}{\tilde{\sigma}\ell})^m=1$ for all $\sigma \in G^p$. \\

For the case $m=1$ we conclude that $f\frac{V}{\ell}\subset C(Z)$ is invariant under $G^p$ (and thus also under $\pi$). 
For the case $m>1$ we {\it claim} that the equation $T^m-f$ is irreducible over $C(Z)$. The field $C(Z)(f_m)$ with $f_m^m=f$
is a Galois extension of $C(Z)$ since for every $\sigma \in G^p$ one has $\frac{\sigma f}{f}$ is an $m$th power in $C(Z)$.
We may embed $C(Z)(f_m)$ into $\overline{C(z)}$ and conclude that $f_m\frac{V}{\ell}$ is invariant under $\pi$.\\
 
{\it Now we prove the claim}. If the equation $T^m-f$ is reducible over $C(Z)$, then there exists a proper divisor $d$ of $m$ and an
element $g\in C(Z)$ with $g^d=f$. The expression $E:=\frac{\sigma g}{g}\cdot (\frac{\ell}{\tilde{\sigma}\ell})^{m/d}$ has the property
that $E^d=1$. One can consider for each $\sigma \in G^p$ the elements $\tilde{\sigma}\in G$ such that $E=1$. This defines a proper
subgroup $H$ of $G$ which has image $G^p$.  This contradicts the assumptions on $G$.

The last statement of the lemma follows from 
$\sigma (f\frac{v}{\ell})=\frac{\sigma f}{f}\cdot \frac{\ell}{\sigma \ell}\cdot f\frac{\sigma v}{\ell}$.
\end{proof}

The monic operator $L$ of order $n$ over $\overline{C(z)}$, defined by  $\ker (L,\overline{C(z)})=W:=f\cdot \frac{V}{\ell}$ has its coefficients in $C(z)$, since $W$ is invariant under $\pi$. This operator $L$ is not yet unique since we have made choices for $\ell$ and $f$.\\

{\it The standard operator $L_{st}$} is defined to be the operator of the form $L_{st}=(\frac{d}{dz})^n+0\cdot (\frac{d}{dz})^{n-1}+\cdots$,
obtained from the above $L$ by a shift $\frac{d}{dz}\mapsto \frac{d}{dz}+a$ for suitable $a=\frac{h'}{h}$ with 
$h\in \overline{C(z)}^*$.

\begin{corollary} $L_{st}$ does not depend on the choices of $\ell$ and $f$ in Lemma {\rm 3.2}.
The solution space of $L_{st}$ has the form $g\cdot W$ for certain $g\in \overline{C(z)}^*$.\\
The differential Galois group $H$ of $L_{st}$  lies in ${\rm SL}(g\cdot W)$.\\
Let the projective spaces $\mathbb{P}(V)$ and $\mathbb{P}(g\cdot W)$  be canonically identified,  
 then $H^p=G^p$ and the Fano curve of $L_{st}$ is equal to $Z$.
 \end{corollary}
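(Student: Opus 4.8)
The plan is to pin down the solution space of $L_{st}$ explicitly and then read off all four assertions from it. Write $W=f\frac{V}{\ell}$ as produced by Lemma 3.2. The normalising shift $\frac{d}{dz}\mapsto \frac{d}{dz}+\frac{h'}{h}$ is exactly conjugation $L\mapsto h^{-1}\circ L\circ h$, so it multiplies the solution space by $h^{-1}$; and by Abel's formula $\mathrm{Wr}'=-a_{n-1}\,\mathrm{Wr}$ the requirement that the coefficient of $(\frac{d}{dz})^{n-1}$ vanish is precisely the requirement that the Wronskian of the solution space be constant. Since $\mathrm{Wr}(cw_1,\dots,cw_n)=c^n\,\mathrm{Wr}(w_1,\dots,w_n)$, the scalar achieving this is $g=\mathrm{Wr}(W)^{-1/n}\in\overline{C(z)}^*$, which exists because $\overline{C(z)}$ is algebraically closed and is unique up to an $n$-th root of unity (a constant, hence projectively irrelevant). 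This gives the claim that the solution space of $L_{st}$ equals $g\cdot W$.

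Independence of the choices of $\ell$ and $f$ follows from the same formula: any other admissible pair $(\ell',f')$ yields $W'=uW$ with $u=\frac{f'\ell}{f\ell'}\in\overline{C(z)}^*$, so $\mathrm{Wr}(W')=u^n\mathrm{Wr}(W)$ gives $g'=u^{-1}g$ (up to a root of unity) and hence $g'W'=(u^{-1}g)(uW)=gW$. As a monic operator of order $n$ is determined by its solution space, $L_{st}$ is unchanged. The inclusion $H\subset\mathrm{SL}(g\cdot W)$ is immediate from the normalisation: $\mathrm{Wr}(gW)=g^n\mathrm{Wr}(W)$ is a constant, so for every $\tau$ in the differential Galois group $H$ (acting $C$-linearly on $gW$) one has $\det(\tau)\,\mathrm{Wr}(gW)=\tau\bigl(\mathrm{Wr}(gW)\bigr)=\mathrm{Wr}(gW)$, whence $\det(\tau)=1$.

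The statement $H^p=G^p$ together with $X_{fano}=Z$ is where the real work lies, and I expect it to be the main obstacle. Writing $gw_i=(gf)\cdot\frac{v_i}{\ell}$ for a basis $v_i$ of $V$, the ratios $\frac{gw_i}{gw_j}=\frac{v_i}{v_j}$ lie in the Picard--Vessiot field $K$, and since $G$ is irreducible the curve $Z$ is nondegenerate, so these coordinate ratios generate $C(Z)$; thus $C(z)\subset C(Z)\subset K$. As $K/C(z)$ is a finite Galois extension (all solutions being algebraic) and $C(Z)/C(z)$ is the Galois subextension with group $G^p$, restriction gives a surjection $r\colon H\twoheadrightarrow G^p$. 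A homogeneous polynomial annihilates $gw_1,\dots,gw_n$ iff it annihilates $\frac{v_1}{\ell},\dots,\frac{v_n}{\ell}$ (the factor $gf$ cancels), i.e.\ iff it lies in the homogeneous ideal of $Z$, so $X_{fano}=Z$ under the canonical identification $\mathbb{P}(gW)\cong\mathbb{P}(V)$ given by $[v]\mapsto[gf\cdot\frac{v}{\ell}]$ (well defined because, by the independence computation above, the scaling $gf/\ell$ is canonical up to a constant). Finally each $\tau\in H$ acts on $\frac{gw_i}{gw_j}=\frac{v_i}{v_j}$ through its restriction $r(\tau)\in G^p$, so the induced projective transformation $\tau^p$ of $\mathbb{P}(gW)\cong\mathbb{P}(V)$ coincides with the geometric action of $r(\tau)$; surjectivity of $r$ then gives $H^p=G^p$. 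The delicate points to secure are that the coordinate ratios really generate all of $C(Z)$ and that the Galois-theoretic restriction $r$ is compatible with the geometric $\mathrm{PGL}(V)$-action, i.e.\ that the abstract and geometric $G^p$-actions agree.
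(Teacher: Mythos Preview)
Your proof is correct and covers all four claims. The paper's own justification is extremely terse: for independence of $L_{st}$ from the choices of $\ell,f$ it simply cites Lemma~3.4 (the equivalence between shifts $\frac{d}{dz}\mapsto\frac{d}{dz}+a$ and scalar multiplication of solution spaces) together with Observation~3.5(1) (two normalised operators that are projectively equivalent coincide), and the remaining statements are waved off as ``following from the construction of $L_{st}$.'' Your Wronskian/Abel computation is precisely the concrete content behind Observation~3.5(1), so for the first two claims you are unpacking what the paper packages into those two auxiliary results; what you gain is an explicit formula $g=\mathrm{Wr}(W)^{-1/n}$, what the paper gains is a reusable lemma. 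For $H\subset\mathrm{SL}(gW)$ and for $H^p=G^p$, $X_{fano}=Z$, you supply the details the paper omits entirely: your argument via the coordinate ratios $\frac{v_i}{v_j}$ generating $C(Z)$ and the Galois restriction $H\twoheadrightarrow G^p$ is the natural way to make ``follows from the construction'' rigorous, and the equivariance statement in the second half of Lemma~3.2 is exactly what guarantees the compatibility of the abstract and geometric $G^p$-actions that you flag as the delicate point at the end.
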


The first statement of the corollary follows easily from Lemma 3.4 and Observation 3.5. The other statements follow from
 the construction of $L_{st}$.

\begin{lemma} Let $L_1,L_2$ be monic differential operators over $C(z)$ such that all their solutions are algebraic.
Let $V_1,V_2\subset \overline{C(z)}$ denote the two solution spaces.  The following are equivalent:\\
(a). $L_1$ is obtained  from $L_2$ by a shift $\frac{d}{dz}\mapsto \frac{d}{dz}+a$ for some element $a\in {C}(z)$.\\
(b). There exists $f\in   \overline{{C}(z)}^*$ such that $V_2=fV_1$.
\end{lemma}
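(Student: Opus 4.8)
Write $D$ for $\tfrac{d}{dz}$. The whole argument rests on one formal observation: for any nonzero $f$ in a differential overfield of $C(z)$, the conjugation $P\mapsto f^{-1}Pf$ is a $C(z)$-algebra automorphism of the ring of differential operators which fixes $C(z)$ and sends $D$ to $D+\tfrac{f'}{f}$, because $f^{-1}D(fy)=Dy+\tfrac{f'}{f}\,y$. Thus conjugation by $f$ \emph{is} exactly the shift $D\mapsto D+a$ with $a=\tfrac{f'}{f}$, and moreover $\ker(f^{-1}L_2f)=f^{-1}\ker(L_2)$. The plan is to play the shift against the multiplicative relation $V_2=fV_1$ through this single identity, the two implications differing only in whether $a$ or $f$ is handed to us.

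For (b)$\Rightarrow$(a) I would start from $f\in\overline{C(z)}^*$ with $V_2=fV_1$ and form the monic operator $f^{-1}L_2f$ over $\overline{C(z)}$. By the observation its solution space in $\overline{C(z)}$ is $f^{-1}V_2=V_1$, which is also the solution space of $L_1$; since a monic operator of order $n$ is recovered from its $n$-dimensional kernel via the Wronskian, this forces $L_1=f^{-1}L_2f$, i.e. $L_1$ is the shift of $L_2$ by $a:=\tfrac{f'}{f}$. It remains to see that $a$, a priori only in $\overline{C(z)}$, in fact lies in $C(z)$. For this I would compare the coefficient of $D^{n-1}$: writing $L_2=D^n+b_{n-1}D^{n-1}+\cdots$ and $L_1=D^n+c_{n-1}D^{n-1}+\cdots$ with $b_i,c_i\in C(z)$, the shift turns the leading term into $(D+a)^n=D^n+na\,D^{n-1}+\cdots$ while fixing the $b_i$, so $c_{n-1}=b_{n-1}+na$ and hence $a=\tfrac{1}{n}(c_{n-1}-b_{n-1})\in C(z)$, as required.

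For (a)$\Rightarrow$(b) I would first produce an $f$ abstractly and then show it is algebraic. Choose a universal (or Picard--Vessiot) differential extension $\Omega\supset\overline{C(z)}$ with field of constants $C$, and pick $f\in\Omega^*$ with $f'=af$. The identity gives $L_1=f^{-1}L_2f$, so the full solution space of $L_1$ in $\Omega$ equals $f^{-1}$ times that of $L_2$ in $\Omega$. Because all solutions of $L_1$ and of $L_2$ are algebraic, each $n$-dimensional solution space in $\Omega$ already coincides with the given $V_i\subset\overline{C(z)}$; hence $V_1=f^{-1}V_2$, that is $V_2=fV_1$. Finally $f$ itself is algebraic: picking any nonzero $v_1\in V_1$, one has $fv_1\in V_2\subset\overline{C(z)}$ and $v_1\in\overline{C(z)}^*$, so $f=\tfrac{fv_1}{v_1}\in\overline{C(z)}^*$.

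The main obstacle is the interface between the two fields, and it is the mirror image of itself in the two directions. In (a)$\Rightarrow$(b) the naive $f=\exp(\int a)$ need not lie in $\overline{C(z)}$ for an arbitrary $a\in C(z)$ (take $a=1$), so the crux is that the hypothesis ``all solutions algebraic'' forces the relevant $f$ back into $\overline{C(z)}$, which is exactly what the ratio $f=(fv_1)/v_1$ exploits. In (b)$\Rightarrow$(a) the dual difficulty is that $\tfrac{f'}{f}$ for algebraic $f$ is generally not rational, so one must use that both operators have coefficients in $C(z)$ to pin $a$ down, which is the role of the $D^{n-1}$-coefficient comparison. Everything else is the uniqueness of a monic operator given its kernel together with the elementary bookkeeping of the substitution $D\mapsto D+a$.
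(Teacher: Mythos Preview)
Your proof is correct and follows essentially the same route as the paper's: both directions hinge on the conjugation identity $f^{-1}L_2f=$ (shift of $L_2$ by $f'/f$), with the algebraicity hypothesis used exactly as you describe to force $f\in\overline{C(z)}^*$ in (a)$\Rightarrow$(b) and the rationality of the coefficients used to force $a\in C(z)$ in (b)$\Rightarrow$(a). The paper's argument is terser (it simply asserts $f\in\overline{C(z)}^*$ ``since $V_1,V_2\subset\overline{C(z)}$'' and $a\in C(z)$ ``since $L_1,L_2$ are defined over $C(z)$''), while your ratio $f=(fv_1)/v_1$ and $D^{n-1}$-coefficient comparison make these steps explicit.
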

\begin{proof} (a)$\Leftrightarrow$(b). Let $L_1$ be obtained from $L_2$ by the shift 
$\frac{d}{dz}\mapsto \frac{d}{dz}+a$. One writes $a=\frac{f'}{f}$ with $f$ in some differential field containing $\overline{C(z)}$.
 One finds $V_2=fV_1$. Since $V_1,V_2\subset \overline{C(z)}$ one actually has $f\in \overline{C(z)}^*$.\\
If $V_2=fV_1$, then clearly $L_1=f^{-1}\circ L_2\circ f$. Since $f^{-1}\circ \frac{d}{dz}\circ f=\frac{d}{dz}+\frac{f'}{f}$, one has that $L_1$ is obtained from $L_2$ by the shift $\frac{d}{dz}\mapsto \frac{d}{dz}+\frac{f'}{f}$. Note that $\frac{f'}{f}\in C(z)$
since $L_1$ and $L_2$ are defined over $C(z)$.\end{proof}

\begin{observation}{\rm (1). For general monic differential operators $L_1,L_2$ of order $n$, property (a) of Lemma 3.4 is called
{\it projective equivalence}.  If both $L_1$ and $L_2$ have the form $d_z^n+0.d_z ^{n-1}+\cdots $, then projective equivalence 
implies equality. \\
(2). The implication (b)$\Rightarrow$(a) in Lemma 3.4 holds for general differential operators. However (a)$\Rightarrow$(b) is in general false since
the equation ${f'}=af$ with  $a\in C(z)$, need not have a solution on $\overline{C(z)}^*$.\\
(3). For differential modules $M_1,M_2$ there is a somewhat different notion of projective equivalence defined by:
there is a 1-dimensional module $E$ such that $M_1\otimes E\cong M_2$.\\
(4). {\it Projective equivalence} of subgroups $G_1,G_2\subset {\rm GL}(V)$ means that $G_1^p=G_2^p\subset {\rm PGL}(V)$.
Projective equivalence of operators imply projective equivalence of their differential Galois groups but the converse is false.
} \hfill $\square$ \end{observation}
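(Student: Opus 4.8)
The plan is to treat the substantive assertions $(1)$, $(2)$, $(4)$ of the Observation separately, since $(3)$ merely records a definition and needs no proof. Throughout I use the interpretation of a shift $\frac{d}{dz}\mapsto\frac{d}{dz}+a$ as conjugation, exactly as in the proof of Lemma 3.4: $L_1$ is obtained from $L_2$ by this shift precisely when $L_1=f^{-1}\circ L_2\circ f$ with $a=\frac{f'}{f}$. For $(1)$ I would track the effect of such a shift on the subleading coefficient. Writing $L_2=d_z^n+c_{n-1}d_z^{n-1}+\cdots$ and expanding $L_1=(d_z+a)^n+c_{n-1}(d_z+a)^{n-1}+\cdots$, a short computation (using $d_z a=a d_z+a'$) shows that the coefficient of $d_z^{n-1}$ in $L_1$ equals $c_{n-1}+na$, the derivatives $a',a'',\dots$ contributing only to $d_z^{n-2}$ and below. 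Hence if both operators are in the normalised form with vanishing $d_z^{n-1}$-coefficient, then $c_{n-1}=0$ and $c_{n-1}+na=0$, so $na=0$; as ${\rm char}\,C=0$ and $n\geq 1$ this forces $a=0$, the shift is trivial, and $L_1=L_2$.

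For $(2)$, the implication $(b)\Rightarrow(a)$ is precisely the computation carried out in the proof of Lemma 3.4: from $V_2=fV_1$ one gets $L_1=f^{-1}\circ L_2\circ f$, and comparing the coefficients of two operators defined over $C(z)$ forces $a=\frac{f'}{f}\in C(z)$; since this never invokes algebraicity of the solutions, it is valid for arbitrary operators. To see that $(a)\Rightarrow(b)$ can fail, I would exhibit an $a\in C(z)$ for which $f'=af$ has no solution in $\overline{C(z)}^*$: concretely $L_1=d_z^2$ and its shift $L_2=(d_z+1)^2$ are projectively equivalent (property $(a)$ with $a=1$), yet $V_2=e^{-z}V_1$ and the only functions relating $V_1$ to $V_2$ are constant multiples of $e^{-z}\notin\overline{C(z)}^*$, so $(b)$ fails.

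For $(4)$, assume $L_1$ and $L_2$ are projectively equivalent, so $L_2$ is the shift of $L_1$ by some $a\in C(z)$ and hence $V_2=fV_1$ with $f'=af$. The one-dimensional solution space $Cf$ of $y'=ay$ is stable under the differential Galois group $G$ of the compositum, so $g(f)=\chi(g)f$ with $\chi(g)\in C^*$ for every $g\in G$. The $C$-linear isomorphism $\Phi:V_1\to V_2,\ v\mapsto fv$, then satisfies $g(\Phi v)=g(f)g(v)=\chi(g)\,\Phi(g v)$, i.e.\ $\Phi$ intertwines the two representations up to the scalar character $\chi$. Passing to ${\rm PGL}$ it induces a $G$-equivariant isomorphism $\mathbb{P}(V_1)\cong\mathbb{P}(V_2)$, whence $G_1^p=G_2^p$. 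The converse fails because $G_1^p=G_2^p$ only asserts that the two projective representations of $\pi$ have the same \emph{image} in ${\rm PGL}(V)$, not that they are equal as homomorphisms, while projective equivalence of the operators forces exactly the latter; thus any two operators realising a common $G^p$ by genuinely different surjections $\pi\twoheadrightarrow G^p$ are not projectively equivalent, the order-$2$ Fuchsian equations with projective group $A_5$ but distinct branch data being explicit instances.

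The main obstacle is the scalar identity $g(f)=\chi(g)f$ in $(4)$: everything turns on recognising that $f$ satisfies a first-order equation over $C(z)$, after which the constancy of $g(f)/f$ is immediate. The remaining parts are routine: a coefficient expansion for $(1)$, a reuse of the proof of Lemma 3.4 together with the classical transcendence of $e^z$ for $(2)$, and the production of a counterexample for the converse in $(4)$.
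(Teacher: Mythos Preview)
The paper states Observation~3.5 without proof; the $\square$ merely closes the remark.  Your arguments are correct and supply the details the paper omits.

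For part~(1), your expansion of $(d_z+a)^n$ giving subleading coefficient $na+c_{n-1}$ is correct (the commutators $[d_z,a]=a'$ only affect $d_z^{n-2}$ and below), so the normalisation forces $a=0$.  For part~(2), your reading of (b) as still demanding $f\in\overline{C(z)}^{\,*}$ matches the paper's own justification (``the equation $f'=af$ \dots\ need not have a solution in $\overline{C(z)}^{\,*}$''), and the pair $L_1=d_z^2$, $L_2=(d_z+1)^2$ is a clean witness.  For part~(4), once you observe that $f=(fv_1)/v_1$ lies in the compositum Picard--Vessiot field and spans the solution space of $y'=ay$, the character argument goes through; your counterexample for the converse (two $A_5$ equations with different branch types, hence different local-exponent differences, which a shift cannot reconcile) is valid.
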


\begin{definition}
{\rm Consider a homomorphism
$\phi: {C}(z)[\frac{d}{dz}]\rightarrow
{C}(x)[\frac{d}{dx}]$ of the form: $z\mapsto \phi (z)\in
{C}(x)\setminus {C}$ and $\frac{d}{dz}\mapsto
\frac{1}{\phi(z)'}(\frac{d}{dx}+b)$ with $b\in {C}(x)$. Let $L\in
{C}(z)[\frac{d}{dz}]$. A {\it weak pullback} of $L$ is an operator
of the form $a\cdot \phi (L)$ with $a\in {C}(x)^*$. The restriction
of $\phi$ to ${C}(z)\rightarrow {C}(x)$ is called the
pullback function.
}\end{definition}

\begin{proposition} Let  $L\in {C}(s)[\frac{d}{ds}]$ be a monic operator of order $n$ such that all solutions are algebraic.
$M\subset \overline{C(z)}$ denotes its solution space. 

The differential Galois group $G$ of $L$ is the image of 
$\pi\rightarrow {\rm GL}(M)$ and $L$ determines some $X_{fano}\subset \mathbb{P}(M)$. Then $L$ is a weak pullback
of the standard operator $L_{st}$ determined by the data $G^p$ and $Z=X_{fano}$. \end{proposition}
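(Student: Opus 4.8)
The plan is to move the solution spaces of the given operator $L$ (over $C(z)$, with $M\subset\overline{C(z)}$) and of the standard operator $L_{st}$ (over $C(s)$, where $s$ is a coordinate on the normalisation of $Z/G^p$ --- the variable denoted $z$ in \S\ref{three}) into one common differential field, to show there that $M$ and the solution space of $L_{st}$ differ only by a scalar factor $h\in\overline{C(z)}^*$, and finally to recognise such a scalar relation as a weak pullback by combining Lemma 3.4 with a short conjugation identity. First I would fix the base fields. The Fano curve $Z=X_{fano}\subset\mathbb{P}(M)$ has function field $C(Z)\subset\overline{C(z)}$, generated by the ratios $m_i/m_1$ of a basis $m_1,\dots,m_n$ of $M$; since $\pi$ acts on $C(Z)$ through $G^p$, the fixed field $C(Z)^{G^p}=C(Z)\cap C(z)$ is the function field of $Z/G^p$, i.e.\ $C(s)$. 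This yields inclusions $C(s)\subseteq C(z)\subseteq\overline{C(z)}$ with $C(z)/C(s)$ finite, hence $\overline{C(s)}=\overline{C(z)}$; the inclusion $C(s)\subset C(z)$ is the pullback function $s=s(z)$ realising $\mathbb{P}^1_z\to\mathbb{P}^1_s$, and on $\overline{C(s)}=\overline{C(z)}$ the derivation $\tfrac{d}{ds}$ becomes $\tfrac{1}{s'}\tfrac{d}{dz}$.

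Next I would compare the two solution spaces inside $\overline{C(z)}$. By Observation 3.1 the operator $L_{st}$ depends only on $(G^p,Z)$, so I may run its construction with $V=M$ and $\ell=m_1$: then $V/\ell=M/m_1\subset C(Z)$, the space $W=f\cdot(M/m_1)$ of Lemma 3.2 is $\pi$-invariant, and by Corollary 3.3 the solution space of $L_{st}$ is $W_{st}=g\cdot W=gf\cdot(M/m_1)$ for suitable $f,g\in\overline{C(z)}^*$. Since tautologically $M=m_1\cdot(M/m_1)$, one obtains, as $C$-subspaces of $\overline{C(z)}$, the relation $M=h\cdot W_{st}$ with $h:=\tfrac{m_1}{gf}\in\overline{C(z)}^*$.

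It then remains to turn $M=h\,W_{st}$ into the statement that $L$ is a weak pullback of $L_{st}$. Writing $L_{st}=\sum_i c_i(\tfrac{d}{ds})^i$ with $c_i\in C(s)$ and substituting $\tfrac{d}{ds}\mapsto D:=\tfrac{1}{s'}(\tfrac{d}{dz}+b)$ with $b\in C(z)$, the identity $\tfrac{d}{ds}=\tfrac{1}{s'}\tfrac{d}{dz}$ gives $D=u\,\tfrac{d}{ds}\,u^{-1}$ for any $u\in\overline{C(z)}^*$ with $u'/u=-b$ (derivative with respect to $z$); hence $\phi(L_{st})=u\,L_{st}\,u^{-1}$ and $\ker(\phi(L_{st}),\overline{C(z)})=u\cdot W_{st}$, and a weak pullback $a\cdot\phi(L_{st})$ with $a\in C(z)^*$ has the same solution space. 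In particular $b=0$ realises $W_{st}$ as the solution space of a monic operator over $C(z)$ (namely $(s')^n$ times that pullback), so $L$ and this operator are two monic operators over $C(z)$ whose solution spaces satisfy $M=h\,W_{st}$; Lemma 3.4 then forces $h'/h\in C(z)$. Consequently $b:=-h'/h$ is an admissible element of $C(z)$, the corresponding weak pullback has solution space $h\,W_{st}=M$, and after choosing $a\in C(z)^*$ to make it monic it becomes a monic operator over $C(z)$ with solution space $M$; by uniqueness it equals $L$. Therefore $L=a\cdot\phi(L_{st})$ is a weak pullback of $L_{st}$.

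I expect the principal difficulty to be concentrated in the first two steps: identifying $\overline{C(s)}$ with $\overline{C(z)}$ while keeping track of the change of derivation, and making rigorous that ``$L$ has Fano curve $Z$'' forces $M$ to be a scalar multiple of the very copy $M/m_1\subset C(Z)$ out of which $W_{st}$ is built. Granting the resulting relation $M=h\,W_{st}$, the last step is essentially formal, the only delicate point being the use of Lemma 3.4 to guarantee that the twisting parameter $b$ can be taken in $C(z)$.
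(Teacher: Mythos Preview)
Your argument is correct and follows essentially the same line as the paper's proof. The paper observes, in its notation, that the solution space of $L_{st}$ is $g\cdot W$ with $W=f\,\tfrac{M}{m}$, that $C(X_{fano}/G^p)$ is the smaller rational function field determining the pullback function, and then simply writes ``Use now 3.4 and 3.6''; you have unpacked precisely that last sentence, first using Lemma~3.4 with $b=0$ to force $h'/h$ into the base field and then choosing $b=-h'/h$ in Definition~3.6 to realise $L$ itself as a weak pullback. The only cosmetic discrepancy is that you interchange the roles of the variables $s$ and $z$ relative to the paper's statement (the paper has $L$ over $C(s)$ and $L_{st}$ over $C(z)$), which you acknowledge; the mathematics is unaffected.
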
    

\begin{proof} 
The standard operator $L_{st}$ has solution space $g\cdot W$ for some $g\in \overline{C(z)}^*$, where
$W=f\frac{M}{m}$ for suitable $f\in \overline{C(z)}^*$ and $m\in M,\ m\neq 0$. Further $C(X_{fano}/G^p)=C(z)$ is a subfield
of $C(s)$. This inclusion determines the pullback function. Use now 3.4 and 3.6. \end{proof}

\begin{remark}{\rm A standard differential equation for given $G^p$ and Fano curve $Z\subset \mathbb{P}(V)$ can be a proper
pullback of another standard equation. This occurs essentially only when $G^p$ is a proper subgroup
 of a finite automorphism group $H$ of  (the desingularisation of) $Z$.  }\end{remark}

\begin{examples}{\rm  For $n=2$ the groups $G^p$ are $D_n,A_4,S_4,A_5$ and the standard equations are well known 
\cite{B-D,Ber} et al. to be
\[(\frac{d}{dz})^2+\frac{3}{16z^2}+\frac{3}{16(z-1)^2}-\frac{n^2+2}{8n^2z(z-1)}\  \mbox{ for } D_n,      \] 
\[(\frac{d}{dz})^2+\frac{3}{16z^2}+\frac{2}{9(z-1)^2}-\frac{3}{16z(z-1)}   \ \ \  \ \mbox{ for } A_4,  \] 
\[(\frac{d}{dz})^2+\frac{3}{16z^2}+\frac{2}{9(z-1)^2}-\frac{101}{576z(z-1)} \ \ \ \mbox{ for } S_4,      \] 
\[(\frac{d}{dz})^2+\frac{3}{16z^2}+\frac{2}{9(z-1)^2}-\frac{611}{3600z(z-1)} \ \  \mbox{ for } A_5. \] 
}\end{examples}

\begin{remarks} { \rm A calculation of the standard operator $L_{st}$, using the above construction, is possible.
One has to compute the $f$ in Lemma 3.2 and one has to compute the derivation on $C(Z)[f]$ in order to compute the monic
differential operator $L$ with solution space $f\frac{V}{\ell}\subset C(Z)[f]$. Further a computation of a generator of 
$C(Z)^{G^p}$ is needed. However for the case $n=2$ the calculation is rather easy. We illustrate this for  the case $A_4\subset {\rm PSL}_2$ and its preimage 
$A_4^{SL_2}$ in ${\rm SL}_2$.\\

For $Z=\mathbb{P}^1$ we use homogeneous coordinates $x,y$ and the function field is $C(t)$ with $t=\frac{y}{x}$. The invariants 
(see \cite{Ca}) under the action of $A_4^{SL_2}$ are generated by:\\
$Q_3=xy(x^4-y^4)$,
 $Q_4=(x^4+\sqrt{-12}x^2y^2+y^4)\cdot  (x^4-\sqrt{-12}x^2y^2+y^4)$,
$Q_6= (x^4+\sqrt{-12}x^2y^2+y^4)^3+(x^4-\sqrt{-12}x^2y^2+y^4)^3$.
There is one relation $Q_6^2-Q_3^4-4Q_4^3=0$. The field of the homogeneous invariants of degree zero is generated over $C$ by
$\frac{Q_6}{Q_3^2}$ and $\frac{Q_4^3}{Q_3^4}$ and there is one relation $(\frac{Q_6}{Q_3^2})^2=1+4\frac{Q_4^3}{Q_3^4}$. Hence we can take
$z=\frac{Q_6}{Q_3^2}$ where $x,y$ in this expression is replaced by $x,tx$. This expresses $z$ as rational function in $t$
of degree 12. Thus $\frac{dt}{dz}$ is also known.  

Now $V=Cx+Cy$, take $\ell =x$, then $\frac{V}{\ell}=C1+Ct$. Then $f\in C(t)$ should satisfy 
$(\frac{x}{\tilde{\sigma}x})^2=\frac{f}{\sigma f}$. A convenient choice for $f$ is $\frac{1}{t'}$ where $t':=\frac{dt}{dz}$.
Thus the Picard--Vessiot field is $C(t)[  \sqrt{  t' }]$. The operator that we want to compute has solution space 
$C\frac{1}{     \sqrt{  t'  }    }+C\frac{t}{      \sqrt{t'  }      }$. This easily leads to the standard operator for case $A_4$.
The other operators of Examples 3.8. are computed in a similar way.  
 The case $n>2$ is more involved and we will use a new method  (see \S 4.3) using evaluations instead.  }  \end{remarks}

\begin{observations} The singular points of the standard equations.\\
 {\rm Instead of developing the general case we consider the case $n=3$, $G=G^p$ and $Z\subset \mathbb{P}(V)$, irreducible,
 $G$-invariant and $C(Z)^G=C(z)$. 
 
Let  $L=d_z^3+a_2d_z^2+a_1d_z+a_0$ be the operator with solution space 
 $W:= f\frac{V}{\ell}\subset C(Z)$ (see Lemma 3.2). $\tilde{Z}$ denotes the desingularisation of $Z$. Consider the Galois covering
 $\tilde{Z}\rightarrow \mathbb{P}_z^1$. \\ 

 Let $z_0\in \tilde{Z}$ be unramified. We may suppose that $z_0$ lies above $z=0$ in $\mathbb{P}_z^1$.
Then $\widehat{O}_{\tilde{Z},z_0}$ can be identified with $\mathbb{C}[[z]]$. The image of $W$ in $\mathbb{C}((z))$ has a basis 
$b_1, b_2, b_3$ with orders $n_1<n_2<n_3$. The equations $L(b_1)=L(b_2)=L(b_3)=0$ determine values of $a_2,a_1,a_0$  
 in $\mathbb{C}((z))$. For the case $(n_1,n_2,n_3)=(0,1,2)$ the operator $L$ has no singularity. For other cases one finds
 an apparent singularity. Typical example:\\
 $b_1=z^{n_1}, b_2=z^{n_2},b_3=z^{n_3}$ yields the equations 
 \[n_i(n_i-1)(n_i-2)=n_i(n_i-1)za_2+n_iz^2a_1+z^3a_0\mbox{ for } i=1,2,3.\]
 Then $za_2,z^2a_1,z^3a_0\in \mathbb{C}$. For $(n_1,n_2,n_3)=(0,1,2)$ one finds $a_2=a_1=a_0=0$;
 for $(n_1,n_2,n_3)=(0,1,3)$ one finds $a_2=z^{-1}, \ a_1=a_0=0$.  In general $n_1,n_2,n_3$ are the local exponents at
 the point $z=0$. \\

 Suppose that $z_0\in \tilde{Z}$ is ramified with ramification index $e$. Again we assume for notational convenience
 that $z_0$ maps to $z=0$.  Then $\widehat{O}_{\tilde{Z},z_0}=\mathbb{C}[[z^{1/e}]]$.
 The image of $W$ in $\mathbb{C}((z^{1/e}))$ generates this field over $\mathbb{C}((z))$. The space $W$ has a basis
 with orders $n_1/e,n_2/e,n_3/e$ and $n_1<n_2<n_3$ with $g.c.d. (n_1,n_2,n_3,e)=1$. It follows that $z=0$ has  a regular singularity
 with local exponents  $n_1/e,n_2/e,n_3/e$. We conclude:\\
 
 \noindent {\it The singular points for $L$ and its normalisation $L_{st}$, which are not apparent, are precisely 
  the branch points for $\tilde{Z}\rightarrow \mathbb{P}_z^1$}.\\
  
 The case $n=3$ and $G\subset {\rm SL}_3$ minimal such that the image is the given $G^p$ but $G\neq G^p$ (see the proof of Lemma 3.2), can be done in a similar way but replacing the field $C(Z)$ by $C(Z)(f_3)$ with Galois group $G$.  } \end{observations}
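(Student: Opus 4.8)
The plan is to read off, at each point $z_1\in\mathbb{P}^1_z$, the local monodromy of $L$ directly from the orders of a local basis of the solution space $W\subset C(Z)$, and then to identify ``non-apparent'' with ``nontrivial projective monodromy'', hence with ``$z_1$ is a branch point of $\tilde Z\to\mathbb{P}^1_z$''. Since all solutions of $L$ are algebraic, every singular point is regular singular, so the local behaviour is completely governed by the exponents and by the presence or absence of logarithmic terms; I will use this freely. The statement then splits into two implications, handled by the two local computations already set up above (the unramified and the ramified cases), together with a final transfer of the conclusion from $L$ to its normalisation $L_{st}$.

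First I would treat an unramified point $z_0\in\tilde Z$ over $z_1$. Here $\widehat{O}_{\tilde Z,z_0}=\mathbb{C}[[z]]$, and $W$ maps to three Laurent series $b_1,b_2,b_3\in\mathbb{C}((z))$ of distinct integer orders $n_1<n_2<n_3$. The key observation is that these three honest single-valued functions already span the full three-dimensional local solution space of $L$; consequently there can be no logarithmic solutions, and the local monodromy is trivial. Thus $z_1$ is either an ordinary point (exactly when $(n_1,n_2,n_3)=(0,1,2)$) or an apparent singularity, and in either case it is not a non-apparent singular point. In contrapositive form this yields the implication ``non-apparent $\Rightarrow$ branch point''.

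Next I would treat a branch point, i.e.\ a ramified $z_0$ with index $e>1$, so that $\widehat{O}_{\tilde Z,z_0}=\mathbb{C}[[z^{1/e}]]$ and $W$ has a local basis of orders $n_1/e,n_2/e,n_3/e$ with $\gcd(n_1,n_2,n_3,e)=1$. Since $e>1$, this gcd condition forbids $e\mid n_j$ for every $j$, so at least one exponent $n_j/e$ is non-integral; the corresponding local solution is genuinely multivalued, whence the local monodromy is nontrivial and $z_1$ is a non-apparent regular singular point of $L$. Moreover the inertia generator $\gamma$ realising this monodromy is a non-identity element of $G$ of order $e$, and since $G=G^p$ contains no nontrivial scalars, $\gamma$ is non-scalar; equivalently the eigenvalues $e^{2\pi i\,n_j/e}$ are not all equal. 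Hence the monodromy is nontrivial even in $\mathrm{PGL}(V)$, which proves ``branch point $\Rightarrow$ non-apparent''.

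Combining the two cases, every point of $\mathbb{P}^1_z$ is either unramified (ordinary or apparent) or a branch point (non-apparent), so the non-apparent singular points of $L$ are exactly the branch points of $\tilde Z\to\mathbb{P}^1_z$. For the normalisation $L_{st}$ I would invoke that it is obtained from $L$ by a shift $\tfrac{d}{dz}\mapsto\tfrac{d}{dz}+\tfrac{h'}{h}$, and is therefore projectively equivalent to $L$ (Lemma 3.4 and Observation 3.5): such a twist multiplies the local monodromy at each point by a scalar and thus leaves the projective monodromy, and with it the set of points carrying nontrivial projective monodromy, unchanged; since a non-scalar monodromy stays non-scalar after multiplication by a scalar, the non-apparent singular points of $L_{st}$ coincide with those of $L$. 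The main obstacle is precisely the ramified case: one must guarantee that a branch point yields a truly non-apparent singularity rather than a merely scalar (projectively trivial) monodromy, and this is exactly where the hypothesis $G=G^p$ enters, ruling out scalar inertia and forcing a non-integral exponent.
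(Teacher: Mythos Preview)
Your argument is correct and follows the same local-analysis strategy as the paper: split into the unramified case (integer exponents, trivial monodromy, hence ordinary or apparent) and the ramified case (fractional exponents forced by the gcd condition, hence genuinely singular). The paper's Observation is itself informal and leaves implicit exactly the two points you make explicit, namely why the hypothesis $G=G^p$ is needed (to exclude nontrivial scalar inertia, which would otherwise allow a branch point with projectively trivial monodromy) and why the conclusion survives passage to $L_{st}$ (projective equivalence preserves the projective monodromy class at each point). Your use of monodromy language in place of the paper's explicit exponent computations is a matter of phrasing rather than a different method.
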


\section{\rm Compoint's theorem and evaluation of invariants}\label{four}  

\begin{theorem}[E. Compoint 1998] { Notation and assumptions:} \\ {\rm
 Suppose that the differential equation $y'=Ay$ over $C(z)$ has a reductive differential Galois group 
 $G\subset {\rm GL}_n(C)$. The differential algebra $R:=k[\{X_{i,j}\},\frac{1}{D}]$ 
 {\rm(}with  $D=\det (X_{i,j})$ {\rm )}  is defined by $(X'_{i,j})=A\cdot (X_{i,j})$. 
 
 \noindent Let $I$ be a maximal differential ideal in $R$ and $K$ the Picard--Vessiot field 
 obtained as field of fractions of $R/I$.\\

 \noindent $ {\rm GL}_n({C})$ acts on the $C(z)$-algebra $R$ by sending the matrix of variables $(X_{i,j})$
 to the matrix $(X_{i,j})\cdot g$ for any $g\in {\rm GL}_n({C})$. Then $G$ is identified with the $g\in {\rm GL}_n({C})$ such 
 that  $gI=I$. \\
 
\noindent  The algebra of invariants ${C}[\{X_{i,j}\}]^G$ is generated over ${C}$ by homogeneous elements
 $f_1,\dots ,f_N$ {\rm (}since $G$ is reductive{\rm )}. The natural map $R\rightarrow K$ induces a 
 homomorphism $ev_e: {C}[\{X_{i,j}\}]^G\rightarrow C(z)$ which is called the {\em evaluation of the invariants}. }\\
 
 \noindent {\bf The statement is}: $I=(f _1-ev_e(f_1),\dots ,f_N-ev_e(f_N))$. \end{theorem}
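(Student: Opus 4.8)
The plan is to prove the two inclusions separately, the easy one being $(f_1-ev_e(f_1),\dots,f_N-ev_e(f_N))\subseteq I$ and the substantial one the reverse. Write $J=(f_1-ev_e(f_1),\dots,f_N-ev_e(f_N))$ and let $O=R/I$ be the Picard--Vessiot ring, a domain with field of fractions $K$. Since $ev_e(f_i)$ is by definition the image of $f_i$ under $R\to O\subseteq K$, each generator $f_i-ev_e(f_i)$ lies in $\ker(R\to O)=I$; hence $J\subseteq I$ and there is a surjection $R/J\twoheadrightarrow O$. Because $J$ is generated by the $G$-invariant elements $f_i-ev_e(f_i)$ (here $ev_e(f_i)\in C(z)=K^G$ is $G$-fixed), the ideal $J$ is $G$-stable, $G$ acts on $R/J$, and the surjection is $G$-equivariant.

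For the reverse inclusion I would identify $R/J$ with a scheme-theoretic fibre of the quotient map. First I would extend $ev_e$ to a $k$-algebra homomorphism $R^G\to k$, namely the restriction to $R^G$ of $R\to K$, which lands in $K^G=k$. Reductivity of $G$ provides a Reynolds operator, so forming $G$-invariants is exact and commutes both with the base change from $C$ to $k$ and with the localization at $D$; this gives $R^G=k[\{X_{i,j}\}]^G[1/D^r]=k[f_1,\dots,f_N][1/D^r]$, where $D^r$ is the least invariant power of $D$. Since $R^G$ is generated over $k$ by the $f_i$ together with the unit $1/D^r$, the kernel of $ev_e\colon R^G\to k$ is generated by the elements $f_i-ev_e(f_i)$, whence
\[
R/J \;=\; R\big/\bigl(\ker(ev_e|_{R^G})\bigr)R \;=\; R\otimes_{R^G}k,
\]
the coordinate ring of the fibre of $q\colon\operatorname{Spec}(R)\to\operatorname{Spec}(R^G)$ over the $k$-point $y_0:=ev_e$. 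As $D$ is inverted in $R$ one has $\operatorname{Spec}(R)=(\mathrm{GL}_n)_k$ and $\operatorname{Spec}(R^G)=(\mathrm{GL}_n/G)_k$, and $ev_e(D^r)\neq 0$ because $D$ maps to a unit of $O$, so $y_0$ really is a point of $(\mathrm{GL}_n/G)_k$.

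The key geometric input is that $G$ acts \emph{freely} on $\mathrm{GL}_n$ by right translation, so $q\colon\mathrm{GL}_n\to\mathrm{GL}_n/G$ is a $G$-torsor; every fibre, in particular $\operatorname{Spec}(R/J)=q^{-1}(y_0)$, is therefore a $G$-torsor over its residue field $k$, and in particular $R/J$ is reduced. On the other hand, Kolchin's theorem on the torsor structure of the Picard--Vessiot ring says that $\operatorname{Spec}(O)$ is also a $G$-torsor over $\operatorname{Spec}(k)$. The $G$-equivariant surjection $R/J\twoheadrightarrow O$ is thus a $G$-equivariant morphism of $G$-torsors over the same base $\operatorname{Spec}(k)$, and any such morphism is an isomorphism. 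Hence $J=I$, as claimed. When $G$ is finite --- the case of interest in this paper --- the last step is even more elementary: both $R/J$ and $O$ are $k$-algebras of dimension $|G|$, the ranks of the two torsors, so the surjection $R/J\twoheadrightarrow O$ is forced to be an isomorphism by counting dimensions.

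I expect the main obstacle to be the second paragraph: one must verify carefully that passing to $G$-invariants commutes with localizing at $D$ and with extending scalars from $C$ to $k$, so that $R/J$ really is the fibre $R\otimes_{R^G}k$ rather than some coarser quotient, and that this fibre carries no nilpotents. Both points rest on reductivity, through exactness of the Reynolds operator, together with the freeness of the right $G$-action on $\mathrm{GL}_n$. Once the fibre is known to be a reduced $G$-torsor, the comparison with the Picard--Vessiot torsor $O$ is formal.
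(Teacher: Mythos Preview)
The paper does not supply its own proof of this theorem; immediately after the statement it refers to Compoint's original article \cite{C} and to Beukers' simplification \cite{B}. Your argument is essentially Beukers' proof: identify $R/J$ with the scheme-theoretic fibre of the quotient morphism $(\mathrm{GL}_n)_k\to(\mathrm{GL}_n/G)_k$ over the $k$-point $ev_e$, use freeness of the right translation action to see that this fibre is a $G$-torsor over $k$, and then invoke Kolchin's torsor description of the Picard--Vessiot ring to conclude that the $G$-equivariant surjection $R/J\twoheadrightarrow O$ is a morphism of $G$-torsors over $\operatorname{Spec}(k)$ and hence an isomorphism. The caveat you flag yourself---that forming $G$-invariants commutes with the base change $C\to k$ and with inverting $D$---is precisely what reductivity, via the Reynolds operator in characteristic zero, guarantees, and this is made explicit in \cite{B}. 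So your proposal is correct and matches the reference the paper cites for the proof.
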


The proof of Compoint's theorem, \cite{C},  has been simplified in \cite{B} and Theorem 4.1 is almost identical to
the formulation in \cite{B}. We will apply Compoint's theorem for the case of finite differential Galois groups. 
Moreover we will need a formulation in terms of differential operators (or scalar differential equations).

\begin{corollary} {\rm Let $ L=d_z^n+a_{n-1}d_z^{n-1}+\cdots +a_1d_z+a_0$  over  $C(z)$ have a
finite differential Galois group $G$ and Picard--Vessiot field $K\subset \overline{C(z)}$.\\
Consider the homomorphism $\phi: R_0=C(z)[X_1,\dots ,X_n]\rightarrow K$ which sends $X_1,\dots ,X_n$
to a basis of the solution space of $L$ in $K$. $G$ acts $C(z)$-linear on $R_0$ by a $C$-linear action on
$CX_1+\cdots +CX_n$ which coincides with the action of $G$ (or of $\pi$) on the solution space of $L$.  \\

\noindent 
The restriction of $\phi$ to $C[X_1,\dots X_n]^G\rightarrow C(z)$ is also called {\em the evaluation of the invariants} and denoted by $ev$ (see also \S 2).
Write $C[X_1,\dots ,X_n]^G=C[\phi_1,\dots ,\phi_r]$ for certain homogeneous elements $\phi_k$. }{ \bf The statement is:\\

 $\ker (\phi )$ is generated by $\{ \phi _k-ev(\phi_k)|\ k=1,\dots ,r\}$. } \end{corollary}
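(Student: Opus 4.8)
The plan is to prove the equality of ideals $\ker(\phi)=J$, where $J:=(\phi_1-ev(\phi_1),\dots,\phi_r-ev(\phi_r))\subseteq R_0$, by realising $R_0/J$ as the fibre of a finite quotient morphism; this is the finite-group incarnation of Theorem 4.1, and the reductivity used there will enter through the fact that a finite group acts freely away from a proper closed subset. First I would record the easy facts. The inclusion $J\subseteq\ker(\phi)$ is immediate, since $\phi(\phi_k-ev(\phi_k))=ev(\phi_k)-ev(\phi_k)=0$ by definition of $ev$. Next, because $G$ is finite the field $K$ is algebraic over $C(z)$, and any $\sigma\in G$ fixing $x_1,\dots,x_n$ fixes all their derivatives, hence fixes the Picard--Vessiot ring and therefore all of $K$; thus $G$ acts faithfully and $K=C(z)(x_1,\dots,x_n)$. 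Since $C(z)[x_1,\dots,x_n]$ is an integral domain that is module-finite over $C(z)$, it is a field, so it equals $K$ and the image of $\phi$ is $K$. Consequently $\ker(\phi)$ is a maximal ideal $\mathfrak m$ of $R_0$ with $R_0/\mathfrak m=K$ and $\dim_{C(z)}K=|G|$. It remains to prove $J=\mathfrak m$.

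The second step identifies $R_0/J$ as a fibre. Writing $R_0^G=C[X_1,\dots,X_n]^G\otimes_C C(z)=C(z)[\phi_1,\dots,\phi_r]$, the map $ev:R_0^G\to C(z)$ is a surjection of $C(z)$-algebras whose kernel $\mathfrak n$ is the maximal ideal generated by the $\phi_k-ev(\phi_k)$ (any $P(\phi_1,\dots,\phi_r)\in\ker(ev)$ differs from $P(ev(\phi_1),\dots,ev(\phi_r))=0$ by an element of that ideal). Hence $R_0^G/\mathfrak n=C(z)$, and $J=R_0\cdot\mathfrak n$, so that
\[
R_0/J\;=\;R_0\otimes_{R_0^G}C(z)
\]
is exactly the scheme-theoretic fibre over the $C(z)$-rational point $\mathfrak n$ of the finite quotient morphism $\pi:\mathbb A^n_{C(z)}\to\mathbb A^n_{C(z)}/\!\!/G=\mathrm{Spec}\,R_0^G$. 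Since $R_0$ is integral over $R_0^G$, the ring $R_0/J$ is a finite $C(z)$-algebra and it surjects onto $K$; the whole problem is now to compute its dimension.

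The third step analyses this fibre after base change to $\overline{C(z)}$. The $\overline{C(z)}$-points of $R_0/J$ are the tuples $v\in\overline{C(z)}^{\,n}$ with $\phi_k(v)=ev(\phi_k)=\phi_k(x_1,\dots,x_n)$ for all $k$, i.e.\ the tuples having the same $G$-invariants as $x=(x_1,\dots,x_n)$. Because $G$ is finite, all orbits are closed and are separated by $C[X_1,\dots,X_n]^G$, so this set is precisely the orbit $Gx$. Now the conjugates $(\tau x_1,\dots,\tau x_n)$, indexed by the $|G|$ embeddings $\tau:K\hookrightarrow\overline{C(z)}$, are pairwise distinct and are permuted simply transitively by $G$; hence $Gx$ is a free orbit of cardinality $|G|$. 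A free $G$-orbit lies in the locus over which $\pi$ is \'etale, so the fibre $R_0/J$ is reduced and $\dim_{C(z)}R_0/J=|G|$.

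Finally, $R_0/J\twoheadrightarrow K$ is a surjection of $C(z)$-algebras of equal dimension $|G|$, hence an isomorphism, so $J=\ker(\phi)$, as claimed. The one genuinely substantial point is the reducedness of the fibre in the third step, equivalently the upper bound $\dim_{C(z)}R_0/J\le|G|$: the surjection onto $K$ only yields $\dim\ge|G|$, and without extra input the fibre of a finite quotient can be non-reduced. What rescues it is the freeness of the orbit $Gx$, coming from separability of $K/C(z)$ together with faithfulness of the representation; this is the concrete form that Compoint's use of reductivity (via the Reynolds operator) takes in the finite setting. One can instead run the Reynolds-operator argument of Theorem 4.1 verbatim to show $(R_0/J)^G=C(z)$ and conclude, or deduce the statement directly from Theorem 4.1 applied to the companion system $y'=Ay$ by intersecting the maximal differential ideal $I$ with the first-row subalgebra $R_0=C(z)[X_{1,1},\dots,X_{1,n}]$; the obstacle along that route is comparing the full matrix invariants with the solution-space invariants $C[X_1,\dots,X_n]^G$.
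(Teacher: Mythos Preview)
Your argument is correct and takes a genuinely different route from the paper's. The paper deduces Corollary~4.2 from Compoint's Theorem~4.1 for the full matrix ring $R=C(z)[\{X_i^j\}]$ by constructing a $G$-equivariant $C(z)$-algebra retraction $\Psi:R\to R_0$, built from a $G$-equivariant derivation $D$ on $R_0$ obtained by averaging an arbitrary lift of $\tfrac{d}{dz}$; this $\Psi$ carries the generators $F-ev_e(F)$ of $\ker(\phi_e)$ (supplied by Theorem~4.1) to generators of $\ker(\phi)$ lying in $R_0^G$. In other words, the paper solves exactly the ``obstacle'' you name in your last sentence: comparing the matrix invariants with the solution-space invariants.

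Your proof, by contrast, is self-contained and does not invoke Theorem~4.1 at all. You identify $R_0/J$ with the fibre of $\mathbb A^n_{C(z)}\to \mathbb A^n_{C(z)}/G$ over the $C(z)$-point $ev$, observe that the faithfulness of $G$ on $V$ forces $x=(x_1,\dots,x_n)$ to lie in the free locus, and then use the standard fact that a finite-group quotient is \'etale over the free locus to conclude that this fibre is reduced of dimension $|G|$; the surjection onto $K$ of the same dimension finishes. The one place a reader might want a reference is the \'etaleness over the free locus (e.g.\ SGA~1, Exp.~V), but this is classical and your appeal to it is legitimate. What your approach buys is a direct, geometric proof that bypasses the heavier machinery of Compoint's theorem entirely in the finite case; what the paper's approach buys is an explicit mechanism (the equivariant retraction $\Psi$) relating the scalar and matrix pictures, which is of independent interest and works uniformly from the reductive statement.
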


\begin{proof} Write again $R_0=C(z)[X_1,\dots ,X_n]$ and $R:=C(z)[\{X_i^j\}_{i=1,\dots ,n}^{ j=0,\dots ,n-1}]$ where $X_i^j$ denotes formally
the $j$th derivative of $X_i$ (all $i,j$). The map $\phi: R_0\rightarrow K$ has a unique extension 
$\phi_e: R\rightarrow K$ defined by $\phi_e(X_i^j)=\phi(X_i)^{(j)}$ (all $i,j$). The restriction of $\phi$ to 
$R_0^G\rightarrow C(z)$ is called $ev$ and the restriction of $\phi_e$ to $R^G\rightarrow C(z)$ is called $ev_e$. \\

By Compoint's theorem, the ideal  $\ker (\phi_e)\subset R$ is generated by the set $\{F-ev_e(F)\ |\ F\in R^G\}$. We want to prove that the ideal $\ker(\phi )\subset R_0$ is generated by $\{F-ev(F)\ | \ F\in R_0^G\}$. We will construct a $C(z)$-algebra homomorphism $\Psi :R\rightarrow R_0$  which has the following properties:\\
$\Psi (r)=r$ for $r\in R_0$; $\Psi (X_i^0)=X_i$; $\phi \circ \Psi =\phi_e$ and $\Psi$ is $G$-equivariant.  \\

Consider an element $\xi \in \ker \phi$. Then also $\xi \in \ker \phi_e$ and $\xi$ is a finite sum
$\sum c(F)\cdot (F-ev_e(F))$ with $F\in R^G$ and $c(F)\in R$. Applying $\Psi$ to this expression yields 
$\xi =\sum \Psi (c(F))\cdot (\Psi(F)-\Psi(ev_e(F))$. Since $\Psi$ is $G$-equivariant  $\Psi(F)\in R_0^G$. Moreover 
$\Psi (ev_e(F))=ev(\Psi (F))$. This implies that $\xi$ lies in the ideal generated by the $\{F-ev(F)\ |\ F\in R_0^G\}$ 
in the ring $R_0$.\\

\noindent {\it Construction of $\Psi$}. Define a $C$-linear derivation $E:R_0\rightarrow R_0$ by $E(z)=1$ and, for
 $i=1,\dots ,n$, ${E}(X_i)\in R_0$ has the property that $\phi {E}(X_i)=\phi(X_i)'$. We note that  $E$ exists since the map $\phi:R_0\rightarrow K$ is surjective. Then $D:=\frac{1}{\#G}\sum _{g\in G}gEg^{-1}:R_0\rightarrow R_0$ is a $C$-linear derivation
 with $D(z)=1$, $\phi (D(X_i))=\phi(X_i)'$ for all $i$ and $D$ is $G$-equivariant.\\ 

Define the $C(z)$-algebra homomorphism $\Psi:R\rightarrow R_0$ by
$\Psi (X_i^j)=D^j(X_i)$ for all $i,j$. The first two properties of $\Psi$ are obvious.
Further $\phi (\Psi (X_i^j))=\phi (D^j(X_i))=\psi(X_i)^{(j)}$ (for all $i,j$) and so
$\phi \circ \Psi=\phi_e$. Finally $\Psi$ is $G$-equivariant   because $D$ is $G$-equivariant and the actions of $G$ on the vector spaces $CX_1^j+\cdots +CX_n^j$, for $j=0,\dots ,n-1$, are identical.  \end{proof}

\begin{algorithm} 
Constructing the differential operator from an evaluation.\\ 
{\rm Let an irreducible  finite group $G\subset {\rm GL}_n(C)$ be given. The group $G$ acts in the obvious way 
on $C[X_1,\dots ,X_n]$. Suppose that $C[X_1,\dots ,X_n]^G=C[f_1,\dots ,f_N]$ with known homogeneous $f_1,\dots ,f_N$.

Consider a $C$-algebra homomorphism $h:C[X_1,\dots ,X_n]^G\rightarrow C(z)$ such that the image of $h$ generates 
the field $C(z)$ over $C$. We will call such $h$ again {\it an evaluation of the invariants}. The aim is to compute a
differential operator $L=d_z^n+a_{n-1}d_z^{n-1}+\cdots +a_1d_z+a_0$ over $C(z)$ that induces the group $G$ and has evaluation in Corollary 4.2 equal to $h$. \\

The $C(z)$-algebra $R:=C(z)[x_1,\dots ,x_n]=C(z)[X_1,\dots ,X_n]/I$ where  $I=(f_1-h(f_1),\dots ,f_N-h(f_N))$ has finite dimension over $C(z)$.\\

(a). {\it Let us suppose that $R$ is reduced}. Then $R$ is a product of finite field extensions of $C(z)$. The derivation $\frac{d}{dz}$ has a unique extension to $R$ which we call $\tilde{D}$. We want to produce an operator 
$\tilde{L}:=\tilde{D}^n+a_{n-1}\tilde{D}^{n-1}+\cdots +a_1\tilde{D}+a_0$ (with all $a_i\in R$) satisfying
$\tilde{L}(x_i)=0$ for $i=1,\dots ,n$. A sufficient condition for the existence of $\tilde{L}$ is 
$\det ( \tilde{D}^jx_i)_{i=1,\dots ,n}^{j=0,\dots ,n-1}\in R^*$. 

If this condition is satisfied, then there are unique elements
$a_{n-1},\dots ,a_0$ for this relation. Then $a_{n-1},\dots ,a_0\in R^G=C(z)$. Then $\tilde{L}$ is the differential operator
associated to the evaluation $h$ of the invariants.\\

(b). {\it Suppose that $R$ has nilpotent elements}. Then one may replace $I$ by its radical $\sqrt{I}$ and $R$ by 
$R_{red}=R/\sqrt{I}$ and  follow the ideas of (a) above.  \\

 {\it Now we suppose that $R$ is reduced}. This is equivalent to $R$ is a finite \'etale extension of $C(z)$. The criterion for this is that the unit ideal of $C(z)[X_1,\dots ,X_n]$ is generated by $I$ and the determinants 
$\det \left(\frac{\partial f_j-h(f_j)}{\partial X_i}\right)_{i=1,\dots ,n}^{j\in J}$ where $J$ ranges over the subsets of $\{1,\dots ,N\}$ with $\# J=n$.

Since the elements $h(f_j)$ belong to $C(z)$ we may omit these in the determinants. 
Write $DET$ for $\det \left(\frac{\partial f_j}{\partial X_i}\right)_{i=1,\dots ,n}^{j=1,\dots ,n}$.\\
Then $df_1\wedge \cdots \wedge df_n=DET\cdot dX_1\wedge \cdots \wedge dX_n$. Thus for $\sigma \in G\subset {\rm GL}_n(C)$ one has $\sigma (DET)=\det (\sigma)^{-1} \cdot DET$. Since $G$ is finite, there exists an integer $m\geq 1$ with
$DET^m\in C[X_1,\dots ,X_n]^G$ and $h(DET^m)\in C(z)$.\\

{\it From $R$ reduced, it follows that (after renumbering)  $h(DET^m)\neq 0$} \\

The extension $\tilde{D}$ of $\frac{d}{dz}$ on $R$ lifts to a derivation $D$ on $C(z)[X_1,\dots ,X_n]$ with $D(z)=1$ and such that 
$D(I)\subset I$. The lift $D$ is not unique since one can add to a $D(X_i)$ any element in the ideal $I$.\\

The condition $D(I)\subset I$ with $I=(f_1-h(f_1),\dots ,f_N-h(f_N))$ is explicitly the following
\[  \sum _{j=1}^n\frac{\partial f_i}{\partial X_j}\cdot D(X_j)\equiv h(f_i)' \mod I, \mbox{ for  } i=1,\dots ,N.\] 

The assumption $h(DET^m)\neq 0$ is sufficient  for the definition of the vector $(DX_1,\dots ,DX_n)^t$ by the equation 
 \[ \left(\frac{\partial f_i}{\partial X_j}\right)(DX_1,\dots ,DX_n)^t=(h(f_1)',\dots ,h(f_n)')^t.\]  
  It follows that $D(f_i-h(f_i))\in I$ for all $i=1,\dots ,N$. Then one can derive formulas for $D^i,\ i=0,\dots ,n$. From this one deduces a linear combination  $L:=d_z^n+a_{n-1}d_z^{n-1}+\cdots +a_1d_z+a_0$ such that $L(x_i)=0$ for all
$i$. If this relation is unique then the $G$-invariant coefficients $a_j$ belong to $R^G=C(z)$. \\
{\it $L$ is the differential operator associated to the evaluation $h$.}  \hfill $\square$\\
}\end{algorithm}

\noindent 
{\it Observation}. (1). A successful application of the procedure depends heavily on properties of  $h$ (the evaluation of the invariants).
If $h$ is known to be the evaluation of an operator $L$, then the procedure produces $L$ up to (projective) equivalence.
For some choices of $h$ the operator $L$ does not exist. It can also happen that $L$ exists but has a differential Galois group 
which is a proper subgroup of $G$. \\
(2) Suppose that the evaluation of the invariants $h$ is changed into $h_\lambda$ given by $h_\lambda(f_i)= \lambda^{\deg f_i}h(f_i)$ for all $i$ and fixed $\lambda$ such that a power $\lambda ^m \in C(z)^*$ for some integer $m\geq 1$.
Then the new operator has the form $\lambda L\lambda ^{-1}$. Thus if $L=d_z^n+a_{n-1}d_z^{n-1}+\cdots +a_1d_z+a_0$, then 
the new operator is obtained from $L$ by the shift $d_z\mapsto d_z-\frac{\lambda'}{\lambda}$ (note that $\frac{\lambda'}{\lambda}\in C(z)$).
\begin{examples} {\rm In general, $I$ is not a maximal ideal of $C(z)[X_1,\dots ,X_n]$ and therefore  $R$ is not be field.
We consider, as in Remarks 3.10,  $G=A_4^{SL_2}$ and $C[x,y]^G=C[Q_3,Q_4,Q_6]$ with the relation
$Q_6^2-Q_3^4-4Q_4^3=0$. For the evaluations $h_1:(Q_3,Q_4,Q_6)\mapsto (z,0,z^2)$ and 
$h_2:(Q_3,Q_4,Q_6)\mapsto (0,z^2,2z^3)$ the above ideal $I$ is not maximal. In both cases, $R$ is a product of a number of copies of the field $C(z)$.\\

Procedure 4.3 applied to the evaluation $h_1$ leads to the {\it first} order differential operator $d_z-\frac{1}{6z}$ instead of a second
order operator. This is in accordance with the observation that for suitable $x_0,y_0\in C^*, x_0\neq y_0$ one has 
$Q_3(x_0z^{1/6},y_0z^{1/6})=z,\ Q_4(x_0z^{1/6},y_0z^{1/6})=0,\ Q_6(x_0z^{1/6},y_0z^{1/6})=z^2$. 
Moreover the differential Galois group is $C_6$, the cyclic group of order 6, which can be seen as a subgroup of $A_4^{SL_2}$.\\

The procedure does not produce an operator for $h_2$. Indeed, $Q_3$ is a product of six linear forms in the two $C$-linearly 
independent solutions $x,y$ and $h_2(Q_3)=0$ contradicts this linear independence.  \hfill $\square$\\
}\end{examples}

\begin{examples} {\rm  Consider the group $S_5$, acting on the vector space $W:={C}e_1\oplus \cdots \oplus {C}e_5/{C}(e_1+\dots +e_5)$. The algebra $A:={C}[x_1,\dots ,x_5]/(x_1+\dots +x_5)$ is given the induced action. 

The invariants are the elementary functions $E_2,E_3,E_4,E_5$, homogeneous polynomials
of degrees 2,3,4,5.  There are no relations. For the action of $S_5$ on $\mathbb{P}(W)\cong \mathbb{P}^3$ we are looking for an irreducible invariant curve $Z$ such that $Z/S_5$ has genus 0. 
  Moreover we want to find an evaluation. \\
  
 Let $Z$ be defined by $E_2=E_3=0$. Thus $Z$ as subset of $\mathbb{P}^4$ is given by $E_1=E_2=E_3=0$. The homogeneous coordinate ring of $Z$ is $B:={C}[x_1,\dots ,x_5]/(E_1,E_2,E_3)$. It can be seen that $B$ is a domain. 
  
  Now $C:=B^{S_5}={C}[E_4,E_5]$ and $C_{((0))}$ is generated by  $\frac{E_4^5}{E_5^4}$. We send this element to $z$. This induces an evaluation $ev: (E_2,E_3,E_4,E_5)\rightarrow (0,0,z,z)$.         
  
   Now $A/I\rightarrow K\supset {C}(z)$, where $I$ is the ideal generated by $\{E_2,E_3,E_4-z,E_5-z\}$ and $K$ denotes the Picard--Vessiot field of the, to be computed, differential equation.

 Associated to the evaluation is the polynomial $X^5+zX-z$, namely the evaluation of $X^5+E_2X^3-E_3X^2+E_4X-E_5$. One sees that $K$ is the splitting field of the polynomial  $X^5+zX-z$ over ${C}(z)$. Further, a basis of solutions  consists of 4 of the 5 zero's of
 $X^5+zX-z$ (as algebraic functions of $z$).    The computation using the Procedure 4.3 yields $L$ equals
 \begin{small}
 \[d_z^4+\frac{4(416z+3125)}{z(256z+3125)}d_z^3+\frac{60(36z+125)}{z^2(256z+3125)}d_z^2+\frac{360}{z^2(256z+3125)}d_z+\frac{120}{z^4(256z+3125)}.\] \end{small}
 } \end{examples}

\begin{lemma} Let $A$ be a finitely generated graded $C$-algebra. Assume that $A$ is a domain and that $A_{((0))}=C(z)$.
Then there exists a $C$-algebra homomorphism $h:A\rightarrow C[z]$ such that $h$ induces the identification $A_{((0))}=C(z)$. 
\end{lemma}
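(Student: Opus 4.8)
The plan is to realize $A$ inside a graded field of fractions and then obtain $h$ by ``evaluating the degree variable'' at a suitable polynomial. First I would invert all nonzero homogeneous elements of $A$ to form the graded field $B$; since the inverse of a nonzero homogeneous element is again homogeneous, $B$ is indeed a graded field, and its degree-zero part $B_0$ is exactly $A_{((0))}=C(z)$. Let $e>0$ be the smallest positive degree occurring among homogeneous elements of $A$ and fix $0\neq t\in B_e$. Every homogeneous element of $A$ then has degree a multiple of $e$, so $B=\bigoplus_{k\in\mathbb{Z}}C(z)\,t^{k}=C(z)[t,t^{-1}]$. A key elementary point is that $t$ is transcendental over $C(z)$: any algebraic relation $\sum_j \alpha_j t^{j}=0$ with $\alpha_j\in C(z)$ decomposes into homogeneous pieces $\alpha_j t^{j}$ of pairwise distinct degrees $je$, forcing every $\alpha_j=0$. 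Hence $A\subset C(z)[t,t^{-1}]$, and writing the homogeneous generators as $g_i=\hat g_i\,t^{m_i}$ with $\hat g_i\in C(z)^{*}$ and $m_i=\deg(g_i)/e$ describes $A$ explicitly inside this Laurent ring.

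For any $\mu\in C(z)^{*}$ the substitution $t\mapsto\mu$ defines a $C(z)$-algebra homomorphism $\mathrm{ev}_\mu\colon C(z)[t,t^{-1}]\to C(z)$, and I would consider the family $h_\mu:=\mathrm{ev}_\mu|_A$. I would next check that, regardless of $\mu$, each $h_\mu$ induces the given identification on $A_{((0))}$: for homogeneous $a=\hat a\,t^{d}$ and $b=\hat b\,t^{d}$ of the same degree one has $h_\mu(a)/h_\mu(b)=\hat a\mu^{d}/(\hat b\mu^{d})=\hat a/\hat b$, which is precisely the image of $a/b$ under $A_{((0))}=C(z)$; the powers of $t$ (equivalently of $\mu$) cancel because the element has degree zero. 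Thus every such $h_\mu$ is automatically compatible with the identification, and the only remaining requirement is to arrange $h_\mu(A)\subseteq C[z]$, i.e.\ to choose $\mu$ so that each $h_\mu(g_i)=\hat g_i\,\mu^{m_i}$ is a polynomial.

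The hard part is precisely this pole-clearing, and the decisive observation is that $\hat g_i\mu^{m_i}\in C[z]$ only constrains the behaviour at the \emph{finite} places of $C(z)$, poles at $z=\infty$ being harmless. Let $z_1,\dots,z_s\in C$ be the finite points where some $\hat g_i$ has a pole, let $v_{z_q}$ denote the order of vanishing at $z_q$, and set
\[
\mu:=\prod_{q=1}^{s}(z-z_q)^{N_q},\qquad N_q:=\max_i\big\lceil -v_{z_q}(\hat g_i)/m_i\big\rceil,
\]
a polynomial whose only pole lies at $\infty$. At each $z_q$ one then has $v_{z_q}(\hat g_i\mu^{m_i})=v_{z_q}(\hat g_i)+m_iN_q\geq 0$ by the choice of $N_q$, while at every other finite place both $\hat g_i$ and $\mu$ are regular; hence each $\hat g_i\mu^{m_i}\in C[z]$, so $h:=h_\mu$ maps $A$ into $C[z]$ as required. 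The subtlety that a scaling factor can \emph{create} new poles at its own zeros does not arise here: since we only need regularity at finite places and $\mu$ is a polynomial, its zeros do no harm. (Implicitly one uses that $A$ is connected graded, $A_0=C$, so that the generators may be taken of positive degree; if $A_0\neq C$ the degree-zero generators would be forced by the identification to land in a prescribed subring of $C(z)$, which is the only way the statement could fail.)
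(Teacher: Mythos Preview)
Your approach is essentially correct and arguably cleaner than the paper's, but there is one slip worth fixing: you take $e$ to be the smallest positive degree occurring in $A$ and then assert that every homogeneous element of $A$ has degree divisible by $e$. This fails in general (for instance $A=C[x,y]$ with $\deg x=2$, $\deg y=3$ has smallest positive degree $2$, yet $y$ has degree $3$). The remedy is immediate: let $e$ instead be the smallest positive degree appearing in $B$, equivalently the $\gcd$ of the degrees of a set of homogeneous generators of $A$. The degree set of the graded field $B$ is then exactly $e\mathbb{Z}$, so $B_e\neq 0$ and every homogeneous element of $A\subset B$ has degree a multiple of $e$. With this correction your argument goes through verbatim.

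The paper's proof takes a more explicitly computational route. It fixes a $\mathbb{Z}$-basis $v(1),\dots,v(r-1)$ of the relation lattice $\{(n_j)\in\mathbb{Z}^r:\sum_j n_j d_j=0\}$, identifies each degree-zero monomial $\prod_j f_j^{v(i)_j}$ with an element $\alpha_i\in C(z)$, sets $\tilde h(f_r)=1$, and solves the resulting multiplicative system for $\tilde h(f_1),\dots,\tilde h(f_{r-1})$. Because the $(r{-}1)\times(r{-}1)$ coefficient matrix has some determinant $m$, the solutions involve $m$th roots of the $\alpha_i$; a subsequent rescaling $\tilde h(f_i)\mapsto\lambda^{d_i}\tilde h(f_i)$ then removes both the radicals and the denominators. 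Your embedding of $A$ into the graded Laurent ring $C(z)[t,t^{-1}]$ sidesteps the radicals entirely: by introducing a free degree-$e$ parameter $t\in B$ rather than pinning one generator to the value $1$, you obtain the scaling freedom $\mu$ from the start and carry out the pole-clearing in a single step. The paper's method, on the other hand, makes the essential uniqueness of $h$ (up to $h(f_i)\mapsto\lambda^{d_i}h(f_i)$) more transparent and is closer to the explicit calculations of \S5.
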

\begin{proof} Write $A=C[f_1,\dots ,f_r]$ where the $f_1,\dots ,f_r$ are homogeneous elements of degrees $d_1,\dots ,d_r \in \mathbb{Z}_{>0}$.
Let $v(i)=(v(i)_1,\dots ,v(i)_r)$  for $i=1,\dots ,r-1$ denote free generators of  $\{(n_1,\dots ,n_r)\in \mathbb{Z}^r\ |\ \sum n_id_i=0\}$. We may and will suppose that the matrix $\{ v(i)_j\}_{i,j=1}^{r-1}$ is invertible. Let $m\in \mathbb{Z}_{\neq 0}$ be its determinant.\\

The elements $\{ f_1^{v(i)_1}\cdots f_r^{v(i)_r}\ |\ i=1,\dots ,r-1\}$ generate the field $A_{((0))}=C(z)$ over $C$ and thus we can identify
$ f_1^{v(i)_1}\cdots f_r^{v(i)_r}$ with some $\alpha _i\in C(z)$. First we define 
$\tilde{h}:A\rightarrow \overline{C(z)}$ by
$\tilde{h}(f_r)=1$ and the $\tilde{h}(f_1),\dots ,\tilde{h}(f_{r-1})$ are such that 
$\tilde{h}(f_1)^{v(i)_1}\cdots \tilde{h}(f_{r-1})^{v(i)_{r-1}}=\alpha _i$ for $i=1,\dots ,r-1$. 
One observes that the $\tilde{h}(f_i)$ are Laurent polynomials in $\alpha _1^{1/m},\dots ,\alpha _{r-1}^{1/m}$. Thus the expressions $\tilde{h}(f_i)$
have the form $R(z)\cdot (z-a_1)^{n_1/m}\cdots (z-a_s)^{n_s/m}$ with $R\in C(z)$, certain distinct $a_1,\dots ,a_s\in C$
and certain integers $n_i\in \{0,\dots ,m-1\}$.\\

 The algebraic relations between the
$f_1,\dots ,f_r$ are generated by homogeneous relations. Hence for any expression $\lambda \in \overline{C(z)}^*$ we can consider
the $C$-algebra homomorphism $h$ is given as $h(f_i)=\lambda ^{d_i} \tilde{h}(f_i)$ for $i=1,\dots ,r$. It is seen that for suitable $\lambda$
the $m$th roots and the denominators disappear. Thus the required $h$ exists and can be seen to be unique (up to constants) under the condition 
that $\sum _{i=1}^r\deg h(f_i)$ is minimal.    \end{proof}

The evaluations $h$ and $\tilde{h}$ are said to be {\it essentially the same}.

\begin{corollary} Let be given an irreducible  finite group $G\subset {\rm SL}(V)$ and an irreducible $G$-invariant curve $Z\subset \mathbb{P}(V)$
such that the function field of $Z/G$ is $C(z)$. Lemma {\rm 4.5} produces an evaluation of the invariants $h:C[V]^G\rightarrow C[z]$ 
which induces the identification of the function field of $Z/G$ with $C(z)$. 

This evaluation $h$ is essentially the same as
the evaluation of the invariants induced by the standard operator $L_{st}$ for the data $G$ and $Z$ (see \S 3).
\end{corollary}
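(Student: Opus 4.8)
The plan is to run the comparison on the ring to which Lemma 4.5 is really applied, and then to reduce everything to the lattice computation already contained in that lemma. First I would fix notation: let $H_Z\subset C[V]$ be the homogeneous ideal of the curve $Z$ and set $A:=(C[V]/H_Z)^G$, the ring of $G$-invariants of the homogeneous coordinate ring of $Z$. Since $Z$ is irreducible, $C[V]/H_Z$ is a domain, hence so is $A$, and $A_{((0))}=C(Z)^{G}=C(Z)^{G^p}=C(z)$, the prescribed function field of $Z/G$. Thus Lemma 4.5 applies to $A$ and produces $h_0:A\to C[z]$ inducing this identification; the evaluation of the corollary is $h=h_0\circ\rho$, where $\rho:C[V]^G\twoheadrightarrow A$ is restriction (surjective, as taking $G$-invariants is exact in characteristic zero). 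For $n=2$ one has $H_Z=0$ and $A=C[V]^G$.

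Next I would realise $ev_{st}$ on the same ring $A$. By Corollary 3.3 the standard operator $L_{st}$ for the data $G,Z$ has solution space $g\cdot W\subset\overline{C(z)}$ with Fano curve equal to $Z$; choosing a basis $x_1,\dots,x_n$ of $g\cdot W$ and letting $\phi_{st}:C[V]\to K$ send $X_i\mapsto x_i$, the homogeneous kernel of $\phi_{st}$ is exactly $H_Z$ (it is prime, $K$ being a field, and defines the irreducible curve $Z$). Hence $ev_{st}=\phi_{st}|_{C[V]^G}$ kills $C[V]^G\cap H_Z=\ker\rho$ and factors as $ev_{st}=\overline{ev}_{st}\circ\rho$ with $\overline{ev}_{st}:A\to C(z)$. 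For a nonzero homogeneous $\bar f\in A$ one has $\overline{ev}_{st}(\bar f)=\phi_{st}(f)\neq 0$, since $\phi_{st}(f)=0$ would force the homogeneous form $f$ into $H_Z$, i.e.\ $\bar f=0$; the analogous nonvanishing for $h_0$ is built into its construction in Lemma 4.5. The essential geometric point is that $\overline{ev}_{st}$ restricted to $A_{((0))}$ is the prescribed identification $A_{((0))}=C(z)$: a degree-zero invariant ratio $P/Q$ goes to $P(x_1,\dots,x_n)/Q(x_1,\dots,x_n)\in C(Z)^{G^p}$, and because $L_{st}$ is standard the quotient Schwarz map $qSchw:\mathbb{P}^1_z=X_{pv}/G\to X_{fano}/G^p=Z/G^p$ is birational, so this value equals $P/Q$ read inside $C(Z)^{G^p}=C(z)$. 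By construction $h_0$ induces the very same identification.

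Finally I would carry out the character/lattice argument that also underlies Lemma 4.5. Write $A=C[\bar f_1,\dots,\bar f_r]$ with $\bar f_i$ homogeneous of degree $d_i$, and put $\mu_i:=h_0(\bar f_i)/\overline{ev}_{st}(\bar f_i)\in\overline{C(z)}^*$, well defined by the nonvanishing above. For every $(a_1,\dots,a_r)\in\mathbb{Z}^r$ with $\sum_i a_id_i=0$ the monomial $\prod_i\bar f_i^{a_i}$ lies in $A_{((0))}$, where $h_0$ and $\overline{ev}_{st}$ agree, whence $\prod_i\mu_i^{a_i}=1$. Thus the character $(a_i)\mapsto\prod_i\mu_i^{a_i}$ kills the kernel of $\delta:\mathbb{Z}^r\to\mathbb{Z}$, $\delta(a)=\sum_i a_id_i$, so it factors through $\mathrm{im}(\delta)=d\mathbb{Z}$ with $d=\gcd(d_1,\dots,d_r)$; choosing $\lambda\in\overline{C(z)}^*$ with $\lambda^d$ equal to the value of the induced map at $d$ gives $\mu_i=\lambda^{d_i}$ for all $i$. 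Hence $h_0(\bar f_i)=\lambda^{d_i}\,\overline{ev}_{st}(\bar f_i)$, and composing with $\rho$ yields $h(f)=\lambda^{\deg f}\,ev_{st}(f)$ for every homogeneous $f\in C[V]^G$, which is precisely the relation defining \emph{essentially the same}. The step I expect to demand the most care is the middle one: checking honestly that $\overline{ev}_{st}$ realises the prescribed identification of $A_{((0))}$ with $C(z)$, which is exactly where the equality $X_{fano}=Z$ from Corollary 3.3 and the birationality of $qSchw$ must be used; granting that, the rest is the same lattice computation that produced $h$ in Lemma 4.5.
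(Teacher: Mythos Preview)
Your proposal is correct and follows the same route as the paper: pass to the graded ring $A$ of $Z/G$, apply Lemma~4.5, and then use the uniqueness of such an $h$ up to $h(f_i)\mapsto\lambda^{\deg f_i}h(f_i)$. The paper's proof is two sentences that simply invoke this uniqueness, whereas you spell out the two points it leaves implicit --- that $ev_{st}$ factors through $A$ (because the Fano curve of $L_{st}$ is $Z$) and that it induces the prescribed identification on $A_{((0))}$ (via birationality of $qSchw$) --- and you give the lattice/character argument behind the uniqueness explicitly; so your write-up is a careful expansion of exactly the paper's argument rather than a different one.
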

\begin{proof} Let $H\subset C[V]$ be the (prime) homogeneous ideal of $Z\subset \mathbb{P}(V)$.  Then $H\cap C[V]^G$ defines
the curve $Z/G$ and the homogeneous algebra of $Z/G$ is $A:=C[V]^G/(H\cap C[V]^G)$. 
Now one applies Lemma 4.5 to $A$. The last statement follows from the unicity of $h$ up to  a change 
$h(f_i)\mapsto \lambda ^{\deg f_i} h(f_i)$ for $i=1,\dots ,r$.   \end{proof}

\begin{remarks} {\rm Suppose that the finite group $G\subset {\rm SL}(V)$ and an explicit presentation by homogeneous generators
$f_1,\dots ,f_r$ and relations for $C[V]^G$ are given. One can try to find $C$-algebra homomorphisms $h:C[V]^G\rightarrow C[z]$ such that
$C(z)$ is generated over $C$ by $h(f_1),\dots ,h(f_r)$. Moreover one may suppose that $h$ is minimal in the sense that
a change $h(f_i)\mapsto \lambda ^{\deg f_i}h(f_i)$ for $i=1,\dots ,r$ does not produce an evaluation $C[V]^G\rightarrow C[z]$ with lower
degrees.

 The kernel of $h$ is a homogeneous prime ideal $\mathcal{P}$ (of codimension 1) in $C[V]^G$.  
Let $\mathcal{Q}\subset C[V]$ denote a homogeneous prime ideal above $\mathcal{P}$. If $\mathcal{Q}$ is unique, then
its stabilizer is $G$ and it defines a $G$-invariant curve $Z\subset \mathbb{P}(V)$ with $Z/G$ of genus zero and evaluation essentially 
equal to $h$. Then Procedure 4.3 produces an operator $L$ with differential Galois group $G$.

 In general, $\mathcal{Q}$ is not unique and its stabilizer is a proper subgroup $G'$ of $G$. Then Procedure 4.3 will produce a differential 
 operator with differential Galois group $G'$. See Examples 4.4 and  \S 5.4   }\end{remarks}

\section{\rm Computations with Procedure 4.3}\label{five}

\subsection{\rm Finite $G\subset {\rm SL}(V)$ with $\dim V=2$.}
For the finite subgroups of ${\rm SL}_2$ and their invariants we use  the notations and equations from \cite{Ca}. \\
 
\noindent (1). $D_n^{SL_2}$ is generated by 
${\zeta \ 0 \choose 0 \  \zeta ^{-1} }, {0\ -1\choose 1\ 0 }$ with $\zeta =e^{2\pi i/2n}$. The group has $4n$ elements; 
the semi-invariants are generated by $f_3=xy$, $f_{12}=x^{2n}+y^{2n}$, $f_{13}=x^{2n}-y^{2n}$ and the invariants have generators\\
$F_1=f_3f_{13}, F_2=f_{12}, F_3=f_3^2$ and relation $F_1^2-F_2^2F_3+4F_3^{n+1}=0$. \\
One computes two generators $v_1,v_2$ of $\{ (a_1,a_2,a_2)\in \mathbb{Z}^3  \ | \ a_1\deg F_1+a_2\deg F_2+a_3 \deg F_3=0\}$. 
The expressions of degree zero $V_1,V_2$ corresponding to $v_1,v_2$ are generators of the field
$A_{((0))}$ over ${C}$. This leads to a suitable choice of an identification
$A_{((0))}={C}(z)$.  Now one can define the evaluation $h$ by assuming $h (F_3)=1$. This might lead to
roots which can be avoided by multiplying $F_1,F_2,F_3$ by $w^{2n+2},w^{2n},w^4$ for a suitable $w$.\\

\noindent $n=2$. The generators $(0,-1,1),(2,-3,0)$ are mapped to $\frac{F_3}{F_2}, \frac{F_1^2}{F_2^3}$.
Further $\frac{F_1^2}{F_2^3}=  \frac{F_3}{F_2}-4$. A choice for $z$ by $\frac{F_3}{F_2}=\frac{1}{2z}$. The evaluation of the invariants is\\
$(F_1,F_2,F_3)\mapsto (2(z^2-1)^{1/2},2z,1)$ or $\mapsto (2(z^2-1)^2,2z(z^2-1),(z^2-1))$.\\

\noindent $n$ is odd. Generators $( 1 ,0 ,-(n+1)/2 ),(0,2,-n)$ and expressions 
$\frac{F_1}{F_3^{(n+1)/2}},\frac{F_2}{F_3^n}$. Equation $(\frac{F_1}{F_3^{(n+1)/2}  })^2=\frac{F_2}{F_3^n}-4$.
A choice for $z$ is  $\frac{F_1}{F_3^{(n+1)/2}  }=2iz$. Then the evaluation of the invariants is
$(F_1,F_2,F_3)\mapsto (2iz,2i(z^2-1)^{1/2},1)$ or $\mapsto (2iz(z^2-1)^{(n+1)/2},2i(z^2-1)^{(n+1)/2},(z^2-1))$.\\

\noindent $n>2$ is even. Generators $( 2,0 ,-(n+1) ), (0 ,1 ,-n/2 )$, mapping to 
$\frac{F_1^2}{F_3^{n+1}},\frac{F_2}{F_3^{n/2}}$. Relation 
$\frac{F_1^2}{F_3^{n+1}}=(\frac{F_2}{F_3^{n/2}})^2-4$. Choice for $z$ by
 $\frac{F_2}{F_3^{n/2}}=2z$. Evaluation $(F_1,F_2,F_3)\mapsto (2(z^2-1)^{1/2},2z,1)$ or $(2(z^2-1)^{1+n/2}, 2z(z^2-1)^{n/2}, (z^2-1))$. 
  {\it For all cases the differential operator is
$L=d_z^2+\frac{z}{z^2-1}d_z-\frac{1}{4n^2(z^2-1)}$.} This becomes the standard equation of Examples 3.9 after $z\mapsto 2z -1$.

 \bigskip

   \noindent 
  (2). $A_4^{SL_2}$. This is a continuation of Remarks 3.10. Generators for the invariants are the homogeneous polynomials
  $Q_3,Q_4,Q_6$ of degrees $6,8,12$ and with relation $Q_6^2=Q_3^4+4Q_4^3$. Further 
  $A_{((0))}=C(\frac{Q_6}{Q_3^2},\frac{Q_4^3}{Q_3^4})=C(z)$ with $z=\frac{Q_6}{Q_3^2}$. One starts with the evaluation
  $(Q_3,Q_4,Q_6)\mapsto (1,(\frac{z^2-1}{4})^{1/3},z)$ which is transformed into the ``minimal'' evaluation
  $((\frac{z^2-1}{4})^2,(\frac{z^2-1}{4})^3,(\frac{z^2-1}{4})^4z)$.\\
 {\it  The differential operator is $d_z^2+\frac{27z^2+101}{144(z^2-1)^2}$}. This becomes the standard
  equation after $z\mapsto 2z -1$.
\bigskip

\noindent 
(3). $S_4^{SL_2}$ has ring of invariants $A:={C}[F_1,F_2,F_3]$ and the degrees are $12,8,18$.
Generators for the relations are $(2,-3,0),(3,0,-2)$. The corresponding functions are 
$\frac{F_2^3}{F_1^2}, \frac{F_3^2}{F_1^3}$ and have the relation 
$\frac{F_2^3}{F_1^2}=\frac{F_3^2}{F_1^3}+108$. A possible evaluation is 
$(F_1,F_2,F_3)\mapsto (1,3\cdot 2^{2/3}z,2\cdot 3^{3/2}(z^3-1)^{1/2})$. This can be improved into
$(2^23^3(z^3-1)^3,2^2 3^3 z(z^3-1)^2, 2^4 3^6 (z^3-1)^5 )$.\\
 {\it The differential operator is $d_z^2+\frac{(7z^3+101)z}{64(z^2+z+1)^2(z-1)^2}$. }
 The equation has 4 singular points and is a pullback of the standard equation. One sees that the given 
 evaluation is not `minimal', i.e., the field $A_{((0))} = C(s)$ with $s = 1-z^3$. The equation is standard in the variable $s$.

\bigskip

\noindent 
(4). The group $A_5^{SL_2}$ has ring of invariants $A:={C}[f_9,f_{10},f_{11}]$ and the degrees are 
$30,20,12$ and relation $f_9^2+f_{10}^3-1728f_{11}^5=0$.
 Generators for the field $A_{((0))}$ are $\frac{f_9^2}{f_{11}^5},\frac{f_{10}^3}{f_{11}^5}$ and there is the relation
 $\frac{f_9^2}{f_{11}^5}=-\frac{f_{10}^3}{f_{11}^5}+1728$. This leads to the evaluation
$(f_9,f_{10},f_{11})\mapsto (-1728(z-1))^{1/2}, (1728z)^{1/3},1)$. One can change this into 
$( (-1728)^{1/2}z^{10}(z-1)^8,(1728)^{1/3}z^7(z-1)^5,z^4(z-1)^3)$. Also the constants can be improved in a similar way
to 
\[( (1728)^{3}z^{10}(z-1)^8,-(1728)^{2}z^7(z-1)^5,-(1728)z^4(z-1)^3).\]
{\it The differential operator is $d_z^2+\frac{864z^2-989z+800}{3600z^2(z-1)^2}$.}
This is the standard equation after $z \mapsto 1-z$. 
 
\subsection{\rm  $G=G_{168}\subset {\rm SL}(V)$ and $\dim V=3$.}
\subsubsection{\rm Computation of the differential equation related to Klein's quartic}
  We use the formulas of \cite{Ber}, p 50. 
 ${C}[X_1,X_2,X_3]^G={C}[F_4,F_6,F_{14},F_{21}]/(rel)$, where
 $F_4,F_6,F_{14},F_{21}$ are homogeneous polynomials of degrees $4,6,14,21$ and there is one relation. 
 The Klein quartic $Z\subset \mathbb{P}(V)$ is given by $F_4=0$ with $F_{4}:=2(X_1X_2^3+X_2X_3^3+X_3X_1^3)$.\\

Unlike the case $\dim V=2$, a direct computation of the standard operator for these data with the methods of \S 3 meets difficulties.
${C}(Z)$ is the field of fractions of ${C}[ \frac{X_2}{X_1},\frac{X_3}{X_1}]/(X_1^{-4}F_4)$. How to compute
$f\in C(Z)^*$ such that $\frac{\sigma (f)}{f}=\frac{\sigma X_1}{X_1}$ for all $\sigma \in G$?
How to compute the derivatives w.r.t. $d_t=\frac{d}{dt}$ of a basis of  the solution space $W=<f,fX_2/X_1,fX_3/X_1>$?\\

We continue with the methods of   \S 4.3. The graded algebra of $Z/G$ is 
\[A:=\left\{{C}[X_1,X_2,X_3]/(F_4)\right\}^G={C}[F_6,F_{14},F_{21}]/(F_{21}^2-4F_{14}^3-54F_6^7).\]
The field $A_{((0))}=C(Z/G)$ is generated over $C$ by $\frac{F_{21}^6}{F_6^{21}}$ and $\frac{F_{14}^3}{F_6^7}$. The relation 
 reads $\frac{F_{21}^6}{F_6^{21}}=(4\frac{F_{14}^3}{F_6^7}+54)^3$ and so $A_{((0))}=C(t)$ with
  $t=\frac{F_{14}^3}{F_6^{7}}$. 
The evaluation of the invariants $h:C[X_1,X_2,X_3]^G\rightarrow C(t)$ is given by
 \[h:(F_4,F_6,F_{14},F_{21})\mapsto  (0,t^2(4t+54)^3,t^5(4t+54)^7,t^7(4t+54)^{11}).\] 
The Procedure 4.3 produces an operator  $S_0$ with singularities $t=0,-\frac{27}{2},\infty$ and the local exponents 
are $1,2/3,1/3||1,1/2,3/2||-3/7,-5/7,-6/7$. \\

The  change $t=\lambda z$ and $d_t=\lambda^{-1}d_z$ and $\lambda =-\frac{27}{2}$ yields singularities at
$0,1,\infty$ with the same local exponents. This leads to the operator $S_1=$ 
\begin{small}
\[  d_z^3+\frac{1}{z}d_z^2+\frac{72z^2+61z+56}{252z^2(z-1)^2}d_z-
\frac{6480z^3+3945z^2+13585z-5488}{24696z^3(z-1)^3.}\] 
\end{small}

The operator $ S_2:=z^{-1}(z-1)^{-1}S_1z(z-1)$ has the ``classical'' local exponents and coincides with the formulas in 
the literature \cite{H,S-U,vdP-U}.
\[S_2=d_z^3+\frac{7z-4}{z(z-1)}d_z^2+\frac{2592z^2-2963z+560}{252z^2(z-1)^2}d_z+
 \frac{\frac{72\cdot 11}{7^3}z-\frac{40805}{24696}}{z^2(z-1)^2}.  \]

\subsubsection{\rm The Hessian of the Klein quartic}
 This is the $G_{168}$-invariant curve  $Z\subset \mathbb{P}(V)$ with equation $F_6=0$. 
 The graded algebra of $Z/G_{168}$ is $C[F_4,F_{14},F_{21}]/(F_{21}^2-4F_{14}^3+8F_{14}F_4^7)$.
 The field ${C}(Z)^{G_{168}}$ is ${C}(t)$ with $t=\frac{F_{14}^2}{F_4^7}$.   This produces 
\[h : (F_4,F_6,F_{14},F_{21})\rightarrow (t^3(t-2)^2, 0,  t^{11}(t-2)^7,2t^{16}(t-2)^{11}).\]
Procedure 4.3 produces the following differential operator (after a change of variables)
\[ d_z^3+\frac{3(3z-2)}{2z(z-1)}d_z^2+\frac{3(116z-35)}{112z^2(z-1)}d_z+\frac{195}{2744z^2(z-1)}.\]

\subsubsection{\rm More third order operators with group $G_{168}$}
The third order operators over $C(z)$, or more precisely, the differential modules of dimension 3, with singular points $0,1,\infty$
and differential Galois group $G_{168}$ are classified in \cite{vdP-U}, using the ``transcendental'' Riemann--Hilbert correspondence. Each case is given by a branch type
$[e_0,e_1,e_\infty]$ and a choice of one of the two irreducible characters $\chi_2,\chi_3$ of dimension 3. 
 The {LIST} is:\\ {\it
$[2,3,7]$, 1 case $g=3$; $[2,4,7]$, 1 case, $g=10$; $[2,7,7]$, 1 case, $g=19$; \\
$[3,3,4]^*$, 2 cases, $g=8$;  $[3,3,7]$, 1 case, $g=17$; $[3,4,4]$, 1 case, $g=15$; $[3,4,7]^*$, 2 cases, $g=24$; 
$[3,7,7]$, 2 cases, $g=33$; $[4,4,4]^*$, 2 cases, $g=22$; $[4,4,7]$, 1 case, $g=31$; $[4,7,7]^*$, 2 cases, $g=40$;  $[7,7,7]$, 1 case, $g=49$. \\}
For many cases in LIST these data lead to a computation of the third order operator. The cases where this fails are indicated by a $*$. \\

If one can identify for an item of LIST the $G_{168}$-invariant (Fano) curve $Z\subset \mathbb{P}(V)\cong \mathbb{P}^2$, then one obtains an evaluation and Procedure 4.3 produces this third order differential operator.  The items $[2,3,7],[2,4,7]\in$ LIST corresponds to the curves $F_4=0$ and $F_6=0$. In \cite{Ber} the smooth $G_{168}$-invariant $Z\subset \mathbb{P}(V)$, such that the normalisation of $Z/G_{168}$ has genus 0, are classified.  They correspond to equations $F_4, F_6,F_{14},\lambda F_6^3+F_{14}F_4, \lambda F_4^5+F_{14}F_6, \mbox{ with } \lambda \in C^*$.  Only the first two have three branch points.\\

{\it We extend this search by studying the invariant curves  $aF_4^3+F_6^2=0$}.\\  Consider the evaluation 
$h:(F_4,F_6,F_{14})\mapsto (1,\lambda ,t)$ with  $\lambda \in {C}^*$, $a=-\lambda ^2$. Then
$h(F_{21})^2$ is $(4t^3-44\lambda t^2+(126\lambda^4+68\lambda^2-8)t+54\lambda^7-938\lambda^5+
 172\lambda^3-8\lambda)$. The discriminant of this polynomial in $t$ is $-64(27\lambda^2-2)^3(\lambda^2+2)^4$. 
 The values $\lambda =(-2)^{1/2}$ and $\lambda =(2/27)^{1/2}$ are special.  In both cases there at most 4 branch points. For other values of $\lambda$ there are more branch points.\\
 The evaluations for $\lambda =(-2)^{1/2}$ and $\lambda =(2/27)^{1/2}$ are
  \[F_4,\ F_6, \ F_{14},\ F_{21}\mapsto w^4, \ w^6\sqrt{-2},\ w^{14}t, w^{21}2(-t+9\sqrt{-2})(t+7\sqrt{-2})^{1/2},\mbox{ and }\]
 \[F_4,\ F_6, \ F_{14},\ F_{21}\mapsto  w^4, \ w^6\sqrt{2/27},\ w^{14}t, 
  w^{21}\frac{-2\sqrt{3}}{243}(27t+\sqrt{6})(-27t+35\sqrt{6})^{1/2}.\] 
 Procedure 4.3 will produce the corresponding operators. We note that the above ``Fricke pencil of curves'' has been studied
 by M. Kato (see \cite{Ka}, Proposition 2.3) using Schwarz maps. The two special cases above  were found and the corresponding third order differential operators are computed. The operators have three branch points and  the solutions are
 in fact  hypergeometric functions.\\

\noindent  {\it Computing the evaluation for operators $L$ in LIST.}\\

An element in LIST is given by a topological covering of $\mathbb{P}^1\setminus \{0,1,\infty \}$ with group $G_{168}$,
produced by a triple $g_0,g_1,g_\infty\in G_{168}$ satisfying  $g_0g_1g_\infty=1$ and generating $G_{168}$. 
One may hope that from a given triple one can read off a part of the evaluation $h$ of the operator, namely 
the orders of the functions $h(Q_4),h(Q_6),h(Q_{14}),h(Q_{21})$ at the points $0,1,\infty$.  

It can be seen that the knowledge of these orders and the relation between the four invariants is sufficient for the computation
of suitable $h$. We illustrate this in more detail for the item $[2,4,7]\in$ LIST. A search for invariants of degrees $4,6,14,21$ leads to a formula

\begin{footnotesize}
\[\left( h(Q_4),h(Q_6),h(Q_{14}),h(Q_{21})\right)= 
\left(\frac{f_4}{t^2(t-1)^3},\frac{f_6}{t^3(t-1)^4},\frac{f_{14}+g_{14}t}{t^7(t-1)^{10}},\frac{f_{21}(t+2400)}{t^{10}(t-1)^{15}}\right),\] 
\end{footnotesize}
where $f_{4},f_{6},f_{14},g_{14},f_{21}$ are constants. The equation satisfied by the invariants produces an actual solution 
$(f_{4},f_{6},f_{14},g_{14},f_{21})=(\frac{-7}{4},\frac{-3}{4},\frac{-149}{8},\frac{1}{4},\frac{1}{8})$. \\

\noindent {\it Evaluation of items in LIST for $G_{168}$}.\\
The first row gives the branch type, the local exponents at $0,1,\infty$ and the value of $\mu$.. The second row gives the evaluation for the invariants
$F_4,F_6,F_{14},F_{21}$.
\begin{footnotesize}
\begin{itemize}
\item{[2,3,7]} $ -\dfrac{1}{2},0,\dfrac{1}{2} ||  -\dfrac{1}{3},-\dfrac{1}{3},0||\dfrac{8}{7},\dfrac{9}{7},\dfrac{11}{7}||\dfrac{12293}{24696}$\\
  $0$;$\dfrac{-3^3}{t^{3}(t-1)^{4}}$;$\dfrac{2^2 3^8}{t^{7}(t-1)^{9}}$;$\dfrac{2^3 3^{12}}{t^{10}(t-1)^{14}}$.
\item{[2,4,7]} $ -\dfrac{1}{2},0,\dfrac{1}{2}||-\dfrac{3}{4},-\dfrac{1}{4},0||\dfrac{8}{7},\dfrac{9}{7},\dfrac{11}{7}||\dfrac{5273}{10976}$\\
$\dfrac{-7}{2t^2(t-1)^2}$;$-\dfrac{3}{4t^{3}(t-1)^{4}}$;$-\dfrac{(-149+2t)}{8t^{7}(t-1)^{10}}$;$\dfrac{(t+2400)}{8t^{10}(t-1)^{14}}$.
\item{[2,7,7]} $ -\dfrac{1}{2},1,\dfrac{1}{2}||-\dfrac{6}{7},-\dfrac{5}{7},-\dfrac{3}{7}||\dfrac{8}{7},\dfrac{9}{7},\dfrac{11}{7}||\dfrac{1045}{686}$\\
 $\dfrac{14}{t^2(t-1)^3}$;$\dfrac{3}{t^{2}(t-1)^{5}}$;$\dfrac{4(-294+294t+t^2)}{t^{6}(t-1)^{12}}$;$\dfrac{8(t-2)(t^2-9604t+9604)}{t^{9}(t-1)^{18}}$.
\item{[3,3,7]} $-\dfrac{2}{3},-\dfrac{1}{3},0||-\dfrac{2}{3},-\dfrac{1}{3},0||\dfrac{9}{7},\dfrac{11}{7},\dfrac{15}{7}||0$ \\
 $0$;$-\dfrac{2^4 3^3}{t^4(t-1)^4}$;$\dfrac{2^{12}3^8}{t^{9}(t-1)^{9}}$;$\left(1-2t\right)\dfrac{2^{17}3^{12}}{t^{14}(t-1)^{14}}$.
\item{[3,7,7]} $ -\dfrac{2}{3},-\dfrac{1}{3},0||-\dfrac{6}{7},-\dfrac{5}{7},-\dfrac{3}{7}||\dfrac{10}{7},\dfrac{13}{7},\dfrac{19}{7}||\dfrac{830}{1029}$\\
  $0$;$\dfrac{3^3}{t^4(t-1)^5}$;$\dfrac{3^8(9t-8)}{t^{9}(t-1)^{12}}$;
  $\dfrac{3^{12}(27t^2-36t+8)}{t^{14}(t-1)^{18}}$.
\item{[4,4,7]} $-\dfrac{3}{4},-\dfrac{1}{4},0||-\dfrac{3}{4},-\dfrac{3}{4},0||\dfrac{9}{7},\dfrac{11}{7},\dfrac{15}{7}||0$ \\
$\dfrac{-14}{t^3(t-1)^3}$;$\dfrac{-12}{t^4(t-1)^4}$;$\dfrac{256t^2-256t-4704}{t^{10}(t-1)^{10}}$;
$\dfrac{512(2t-1)(4t^2-4t+2401)}{t^{15}(t-1)^{15}}$.
\item{[7,7,7]} $-\dfrac{6}{7},-\dfrac{5}{7},-\dfrac{3}{7}||-\dfrac{6}{7},-\dfrac{5}{7},-\dfrac{3}{7}||\dfrac{9}{7},\dfrac{11}{7},\dfrac{29}{7}||0$\\
 $\dfrac{16}{t^3(t-1)^3}$;$\dfrac{512t^2-512t+5}{16t^5(t-1)^5}$;$\dfrac{P_6(t)}{t^{12}(t-1)^{12}}$;
 $\dfrac{(2t-1)P_8(t)}{2^{12}t^{18}(t-1)^{18}}$.
\end{itemize}
where $P_6(t)=2^{12}t^6-12288t^5+49280t^4-78080t^3+\dfrac{74441}{2}t^2-\dfrac{457}{2}t+\dfrac{1}{2^8}$ and\\ 
$P_8(t)=536870912t^8-2147483648t^7-74398564352t^6+230711885824t^5\\
-231821246464t^4+76617285632t^3+502637824t^2-1385728t-1$\\
\end{footnotesize}

One observes from these data a relation between the local exponents and the orders of the invariants at the points
$0,1,\infty$. Another observation is that items $[3,3,7]$ and $[3,7,7]$ are weak pullbacks of the ``standard'' equation
$[2,3,7]$.   The pullback functions are respectively $\phi (t)=4t(t+1)+1$ and $\phi(t)=-\frac{(27t^2-36t+8)^2}{t-1}$.

The Fano curve for $[2,4,7],\ [2,7,7],\ [4,4,7]$ is $-\frac{7}{54}F_6^3-\frac{1}{8}F_4F_{14}+F_4^3F_6$ and has genus 10.
$[2,7,7]$ and $[4,4,7]$ are weak pullbacks of the ``standard'' equation $[2,4,7]$ with pullback functions $\phi(t)=\frac{-(t-1)^2}{4(t-1)}$ and $\phi(t)=(2t-1)^2$. The Fano curve for $[7,7,7]$ is a long expression of degree 36.

  \subsection{\rm $G=H_{72}\subset {\rm SL}(V)$ and $\dim V=3$.} 
  The group $G=H_{72}$ and its invariants are described as group $F$ in the  \cite{Ca}, p. 59
  and occurs as the second example,Th\'eor\`eme 4, in   \cite{Ro}.  We adopt the last description:
  \begin{footnotesize}
  \[P=xyz;  Q=x^3y^3+x^3z^3+y^3z^3;  S=x^3+y^3+z^3\]
  \[F_1=S^2-12Q, F_2=(x^3-y^3)(x^3-z^3)(y^3-z^3),F_3=S^4+216P^3S,F_4=(S^2-18P^2-6PS)^2.\]
  \end{footnotesize}
  The algebra of invariants is $C[x,y,z]^G=C[F_1,F_2,F_3,F_4]$ and there is one relation
  $(432F_2^2+3F_1F_3-F_1^3)^2-4(F_4^3-3F_4^2F_{3}+3F_4F_{3}^2)=0$.

  The choice of the $G$-invariant irreducible curve $Z\subset \mathbb{P}(V)$ is given by $F_1=0$. The graded algebra of 
  $Z/G$ is $A={C}[F_2,F_3,F_4]/(432^2F_2^4-4(F_4^3-3F_4^2F_3+3F_4F_3^2))$.
The field $A_{((0))}$ is generated by $\frac{F_2^4}{F_3^3}, \frac{F_4}{F_3}$ and there is the equation
 $432^2\frac{F_2^4}{F_3^3}= 4((\frac{F_4}{F_3})^3 -3 (\frac{F_4}{F_3})^2+3\frac{F_4}{F_3})$. Hence $A_{((0))}$ is generated 
 by $\frac{F_4}{F_3}$. This leads to the evaluation $F_1\mapsto 0$ and 
 $(F_2,F_3,F_4)\mapsto ( \sqrt[4]{   \frac{t^3-3t^2+3t}{6^6} }, 1 , t )$. This can be simplified as follows.
 Write $w= \frac{t^3-3t^2+3t}{6^6}$ and multiply the values for $F_2,F_3,F_4$ by $w^{3/4},w,w$.
 the result is  $(F_1,F_2,F_3,F_4)\mapsto (0,w, w, wt )$. \\
 Procedure 4.3 produces the differential operator
 \[d_t^3+\frac{5t^3-15t^2+15t-6}{(t^3-3t^2+3t)(t-1)}d_t^2+\frac{(160t^3-480t^2+480t-117)(t-1)}{48(t^3-3t^2+3t)^2}d_t\] 
\[-\frac{(160t^3-480t^2+480t-189)(t-1)^3}{432(t^3-3t^2+3t)^3}.\]
On observes that $t=1$ is an apparent singularity and that $\infty$ and the three roots of  $t^3-3t^2+3t$ are the 4
singular points.

{\it Remarks}.   At present we have no differential equation over $C(t)$ with three singularities.
A differential equation of order 3 with Galois group $H_{72}$ was also found by M. van Hoeij, see section 2 of
 \cite{Ho}.

 \subsection{\rm  Differential equations for $G=A_5$ and $\dim V=3$.}

$A_5$ is explicitly given as a subgroup of ${\rm SL}_3(\mathbb{Q}(\zeta_5))\subset {\rm SL}_3(\mathbb{C})$ where $\zeta_5=e^{2\pi i/5}$ by the generating matrices (copied from \cite{Ber}).
 This corresponds to the irreducible character (say) $\chi_2$ of dimension 3 for $A_5$. The other irreducible character $\chi_3$ of dimension 3 is obtained from the automorphism $\zeta_5\rightarrow \zeta_5^{2}$ of the field $\mathbb{Q}(\zeta_5)$. 
Further $A_5^{SL_2}\subset {\rm SL}_2(\mathbb{C})$ is the preimage of $A_5$ under the  ``second symmetric power map''
${\rm SL}_2\rightarrow {\rm SL}_3$ given by $A\mapsto sym^2A$. The group $A_5^{SL_2}$ has two irreducible characters of dimension 2 and their
second symmetric powers are the above 3-dimensional characters $\chi_2,\chi_3$ of $A_5$. The following proposition is probably well known. Due to  lack of reference we sketch a proof (related to theorem 2.1 in  \cite{N-vdP}).
 
 \begin{proposition} {\em Comparing differential modules for $A_5$ and $A_5^{SL_2}$}.\\  
 {\rm (1)}. Suppose that the 3-dimensional differential module $M$ over $C(z)$ has differential Galois group $A_5$.
 Then there is a 2-dimensional differential module $N$ with differential Galois group $A_5^{SL_2}$ such that 
 $sym^2N$ is isomorphic to $M$. \\
  \noindent {\rm (2)}. The module $N$ is unique up to tensoring with a 1-dimensional module $D$ such that $D^{\otimes 2}={\bf 1}$, where ${\bf 1}$ denotes the trivial differential  module. 
\end{proposition}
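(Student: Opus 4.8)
The plan is to transport everything through the equivalence ${\rm Diff}_{\overline{k}/k}\cong Repr_\pi$ of the introduction, turning the two assertions into a lifting problem and a rigidity statement for continuous representations of $\pi$. Under this equivalence, $M$ corresponds to a continuous surjection $\rho:\pi\twoheadrightarrow A_5\subset {\rm SL}_3(C)$ (surjective because the differential Galois group, the Zariski closure of a finite image, equals $A_5$). The map $sym^2:{\rm SL}_2\to {\rm SL}_3$ has kernel $\{\pm 1\}$, so its restriction to $A_5^{SL_2}=(sym^2)^{-1}(A_5)$ is a central extension $1\to \{\pm 1\}\to A_5^{SL_2}\xrightarrow{q}A_5\to 1$, and $sym^2|_{A_5^{SL_2}}$ factors as $\iota\circ q$, where $\iota:A_5\hookrightarrow {\rm SL}_3$ is the given character $\chi_2$. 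For part (1) I would then lift $\rho$ through $q$. Here I use that $\pi$, the absolute Galois group of $C(z)$ with $C$ algebraically closed of characteristic $0$, is a free profinite group; hence it has cohomological dimension $\le 1$, the obstruction in $H^2(\pi,\mathbb{Z}/2)$ to lifting $\rho$ vanishes, and a continuous $\lambda:\pi\to A_5^{SL_2}$ with $q\lambda=\rho$ exists.

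Next I would verify that $\lambda$ is automatically surjective, so that the associated $2$-dimensional module $N$ (the image of $\lambda$ under the equivalence, via $A_5^{SL_2}\hookrightarrow {\rm SL}_2$) has differential Galois group exactly $A_5^{SL_2}$. Since $q(\mathrm{im}\,\lambda)=A_5$ has order $60$ while $A_5^{SL_2}=2.A_5$ is the binary icosahedral group, which is perfect, the extension $q$ is non-split and $2.A_5$ contains no subgroup isomorphic to $A_5$; therefore $\mathrm{im}\,\lambda$ cannot be a complement of $\{\pm 1\}$, forcing $\mathrm{im}\,\lambda\cap\{\pm 1\}=\{\pm 1\}$ and $\mathrm{im}\,\lambda=A_5^{SL_2}$. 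Finally $sym^2N$ corresponds to $sym^2\circ\lambda=\iota\circ q\circ\lambda=\iota\circ\rho$, which is precisely the representation attached to $M$, so $sym^2N\cong M$ and (1) follows.

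For (2), the easy direction is formal: if $D^{\otimes 2}\cong \mathbf{1}$ then $sym^2(N\otimes D)\cong sym^2N\otimes D^{\otimes 2}\cong sym^2N$. Conversely, let $N,N'$ be $2$-dimensional with $sym^2N\cong sym^2N'\cong M$, corresponding to $\lambda,\lambda':\pi\to {\rm SL}_2$. An isomorphism $sym^2N\cong sym^2N'$ is an element of ${\rm GL}_3(C)$ intertwining $sym^2\lambda=\iota\rho$ and $sym^2\lambda'=\iota\sigma\rho$, where $\sigma\in\mathrm{Aut}(A_5)$ is defined by $q\lambda'=\sigma\circ q\lambda$. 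This intertwiner forces $\chi_2\cong\chi_2\circ\sigma$, and since $\chi_2\not\cong\chi_3$ the only automorphisms preserving the class of $\chi_2$ are inner; hence, conjugating $\lambda'$ by a suitable element of ${\rm SL}_2$ (a lift to $A_5^{SL_2}$ of the relevant inner automorphism), I may assume $q\lambda'=q\lambda=\rho$. Then $\lambda'(g)\lambda(g)^{-1}\in\ker q=\{\pm 1\}$ for all $g$, and centrality of $\{\pm 1\}$ makes $c:g\mapsto \lambda'(g)\lambda(g)^{-1}$ a homomorphism $\pi\to\{\pm 1\}$. The one-dimensional module $D$ attached to $c$ satisfies $D^{\otimes 2}\cong \mathbf{1}$ and $\lambda'=c\cdot\lambda$, i.e.\ $N'\cong N\otimes D$.

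The main obstacle is the existence of the lift in part (1): this is exactly where one needs that $\pi$ has cohomological dimension $\le 1$, equivalently that it is free profinite (the Riemann existence theorem together with the profinite completion of the topological fundamental group). Once the lift is granted, the remaining points are essentially formal: surjectivity comes for free from the non-splitness of $2.A_5$, and the rigidity in (2) is Schur's lemma combined with the fact that the two three-dimensional characters $\chi_2,\chi_3$ are non-isomorphic.
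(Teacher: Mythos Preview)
Your proof is correct, but for part~(1) it follows a genuinely different route from the paper's primary argument. The paper works directly inside the differential module: it observes that $sym^2M$ contains a trivial one-dimensional submodule generated by a nondegenerate quadratic form (the $A_5$-invariant in $sym^2W$), uses the $C_1$ property of $C(z)$ to put this form into the shape $x_1x_3-x_2^2$, and then reads off the $2\times 2$ connection matrix of $N$ explicitly from the resulting $3\times 3$ matrix of $\partial$; a short argument with $\det M=\mathbf{1}$ is needed to show the auxiliary scalar $q$ is a square. Your argument instead transports the problem through the Tannakian equivalence and solves the embedding problem $1\to\{\pm 1\}\to 2.A_5\to A_5\to 1$ via the fact that $\pi$ is free profinite (Douady), together with the non-splitness of $2.A_5$ to force surjectivity of the lift. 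The paper in fact acknowledges exactly this alternative in an Observation immediately after its proof, citing Malle--Matzat for the solvability of the embedding problem over $C(z)$. The paper's method is constructive and produces $N$ explicitly, which matches the computational flavor of the surrounding text; yours is cleaner and more conceptual but non-constructive. For part~(2) the two arguments are essentially the same: both pass to representations of $\pi$, and where the paper invokes uniqueness of $A_5\subset{\rm PSL}_2$ up to conjugacy, you equivalently use that the outer automorphism of $A_5$ swaps $\chi_2$ and $\chi_3$, forcing the comparison automorphism to be inner.
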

\begin{proof} The action of $A_5$ on the solution space $W$ of $M$ induces an action on $sym^2W$. It has an invariant line 
and  this corresponds to a  1-dimensional submodule $T$ of $sym^2(M)$. 
A non zero element of $T$  is a non degenerate quadratic form in terms of a basis of $M$.
This form has a non trivial zero over ${C}(t)$ since the latter is a $C_1$-field. Thus there is a basis $x_1,x_2,x_3$ of $M$ such that
$T$ is generated by $x_1x_3-x_2^2$. Moreover $T$ is the trivial module since $A_5$ is simple.  For some $q\in \mathbb{C}(t)^*$ one has
 $\partial (q(x_1x_3-x_2^2))=0$.\\   
  
 The equation $\frac{1}{q}\partial (q(x_1x_3-x_2^2))=0$ implies that the matrix $A$ of $\partial$ w.r.t. the basis $x_1,x_2,x_3$
of $M$ has the form $\left(\begin{array}{ccc} a_1& b_1&0 \\ 2b_3 &\frac{-q'}{2q} &2b_1 \\ 0 & b_3 & -a_1-\frac{q'}{q} \end{array}\right)$.
 Since $A_5$ is simple, $\det M={\bf 1}$. This implies that the equation $y'=tr(A)y$  has a solution in ${C}(t)$. Therefore
  $q^{-3/2}\in {C}(t)$ and thus $q$ is a square.  \\

  We may suppose $q=1$. Now $\partial (x_1x_3-x_2^2)=0$ implies that the matrix of $\partial$ with respect to the basis
  $\{x_1,x_2,x_3\}$  has the  form  $\left(\begin{array}{ccc}  2a&b &0 \\  2c& 0& 2b \\  0&c &-2a \end{array}\right)$ for certain $a,b,c$. 
  Consider the 2-dimensional  module $N$ and a basis $y_1,y_2$  such that the matrix of $\partial$ is ${a\ b\choose c\ -a }$.
  Then $sym^2(N)$ has on basis $x_1=y_1^2,\ x_2=y_1y_2,\ x_3=y_2^2$ the above matrix. Thus $sym^2N\cong M$. 
  The differential Galois group $G\subset {\rm SL}_2$ of $N$ has the property that $sym^2(G)=A_5$. Hence the action of $G$ on 
  $\mathbb{P}^1$ is that  of $A_5$ and so $G=A_5^{SL_2}$. \\

\noindent 
 {\it Observation}.  The equivalence of Tannaka categories ${\rm Diff}_{\overline{k}/k}\rightarrow Repr_\pi$, where $k=C(z)$ and $\pi=Gal(\overline{k}/k)$, leads to a translation of part (1) of 5.1 into solvability of the embedding problem for $1\rightarrow \{\pm 1\}\rightarrow A_5^{SL_2}\rightarrow A_5\rightarrow 1\ $:\\

\noindent 
{\it Any continuous surjective homomorphism $\pi \rightarrow A_5$ lifts to a continuous surjective homomorphism $\pi \rightarrow A_5^{SL_2}$. }\\   

We note that $k=C(z)$ is known to have this property, see \cite{Ma-Ma}, Theorem 1.10, part (a), on page 272. 

  Let the 1-dimensional module $D$ satisfy $D^{\otimes 2}={\bf 1}$. Then $sym^2(D\otimes N)\cong D^{\otimes 2}\otimes sym^2(N)=M$.
  This proves one implication for part (2) of 5.1. Using the equivalence of Tannaka categories, part (2) translates into:\\
  
  \noindent {\it Let for $i=1,2$ be given surjective continuous homomorphisms $\rho_i:\pi \rightarrow A_5^{SL_2}$ such that 
  $sym^2(\rho_1)\cong sym^2(\rho_2)$. Then there exists a continuous homomorphism $\chi :\pi \rightarrow \{\pm1 \}\subset A_5^{SL_2}$ 
  such that $\rho_1 \cong \rho_2\otimes \chi$.}\\
  
  Let $can: {\rm SL}_2\rightarrow {\rm PSL}_2$ denote the canonical map. The assumption on $\rho_1,\rho_2$  is equivalent to
   $can\circ \rho_1\cong can\circ \rho_2$.  Since the subgroup $A_5\subset {\rm PSL}_2$ is unique up to conjugation, 
  we may suppose $can\circ\rho_1(g)=can\circ\rho_2(g)$ for every $g\in \pi$. Hence $\rho_1(g)=\chi (g)\rho_2(g)$ for some continuous
  homomorphism $\chi:\pi \rightarrow \{\pm 1\}$. \end{proof}

\begin{observations} Comparing operators for $A_5^{SL_2}$ and $A_5$.\\ 
{\rm (1). In \cite{vdP-U} there is a list of the third order differential operators (up to equivalence) with differential Galois group 
$A_5$ and  singular points $0,1,\infty$. The branch types are $[2,3,5], [2,5,5],[3,3,5],[3,5,5](1),[3,5,5](2),[5,5,5]$.
For each branch type there are two differential modules; one for each of the 3-dimensional irreducible characters $\chi_2,\chi _3$.
The genera for the Picard--Vessiot fields are $0,4,5,9,9,13$.

Looking at the genera, one sees that each $A_5$ case is the second symmetric power of two, three or four  second order equations with group 
 $A_5^{SL_2}$ and singularities $0,1,\infty$ (again a list in  \cite{vdP-U}). This is explained by Proposition 5.1 and the observation that there are  three 1-dimensional modules $D$
with $D^{\otimes 2}={\bf 1}$ and singular points $0,1,\infty$. Namely $D=C(z)e$ with $\partial e=ae$ and 
  $a\in \{\frac{1}{2z},\frac{1}{2(z-1)}, \frac{1}{2z(z-1)} \}$. \\ 
  
\noindent  (2).  Comparing the local exponents for $A_5^{SL_2}$ and $A_5$ in both lists of \cite{vdP-U} one sees that only for the two cases of [3,3,5] the operator $L_3$ is a second symmetric
 power. In all other cases the module $M$ is a $sym^2(N)$ but this does not hold for the operators. \\

 \noindent (3). Let $L_{st,A_5^{SL_2}}$ denote the standard second order operator for $A_5^{SL_2}$ i.e., say, with local exponents: $1/4, 3/4||1/3,2/3||-2/5,-3/5$. Let $L_{st,A_5}$ denote the second symmetric power of this operator. Then Klein's theorem for order two equations
 with differential Galois group $\in \{D_n^{SL_2},A_4^{SL_2},S_4^{SL_2}, A_5^{SL_2}\}$ extends to  $A_5$ and order three
 differential equations, see Proposition 5.3. $\ $\hfill  $\square$  }\end{observations}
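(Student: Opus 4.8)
\emph{The plan} is to derive the order-three statement for $A_5$ from the classical order-two Klein theorem for $A_5^{SL_2}$ by passing through the second symmetric power, using Proposition 5.1. First I would let $L$ be a third-order operator over $C(z)$ with differential Galois group $A_5$ acting through $\chi_2$ or $\chi_3$, and let $M$ be its differential module. Proposition 5.1(1) provides a two-dimensional module $N$ with differential Galois group $A_5^{SL_2}$ and an isomorphism $sym^2(N)\cong M$; by Proposition 5.1(2) such an $N$ is unique up to tensoring with a one-dimensional module $D$ satisfying $D^{\otimes 2}={\bf 1}$.

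Next I would apply the order-two Klein theorem to $N$. For $n=2$ this is precisely Proposition 3.7 together with Examples 3.9: up to projective equivalence, $N$ is a weak pullback of the standard operator $L_{st,A_5^{SL_2}}$. Using the dictionary between operators and modules of Observation 3.5 and Lemma 3.4, this says at the level of modules that there is a one-dimensional module $E$ and a base change $P^{*}$ along a finite morphism $\mathbb{P}^1_z\rightarrow \mathbb{P}^1_s$ with $N\cong E\otimes P^{*}(N_{0})$, where $N_{0}$ is the module of $L_{st,A_5^{SL_2}}$. Since $A_5^{SL_2}\subset {\rm SL}_2$ one has $\det N={\bf 1}$, and $\det P^{*}(N_{0})=P^{*}(\det N_{0})={\bf 1}$ as well, whence $E^{\otimes 2}=\det N\otimes(\det P^{*}N_{0})^{-1}={\bf 1}$.

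The core step is that $sym^2$ is compatible with both operations. Because $P^{*}$ is base change along the field inclusion $C(s)\hookrightarrow C(z)$ determined by the pullback function, and $sym^2$ is a functorial tensor construction, there are canonical isomorphisms $sym^2(P^{*}N_{0})\cong P^{*}(sym^2 N_{0})$ and $sym^2(E\otimes N_0)\cong E^{\otimes 2}\otimes sym^2(N_0)$. Combining these with the previous paragraph and using $E^{\otimes 2}={\bf 1}$ gives
\[
M\cong sym^2(N)\cong E^{\otimes 2}\otimes P^{*}\bigl(sym^2 N_{0}\bigr)\cong P^{*}\bigl(M_{0}\bigr),
\]
where $M_{0}$ is the module of $L_{st,A_5}=sym^2(L_{st,A_5^{SL_2}})$. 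The indeterminacy $D$ of Proposition 5.1(2) is harmless for the same reason, since $sym^2(D\otimes N)\cong D^{\otimes 2}\otimes sym^2(N)=sym^2(N)$. Thus $M$ is an honest pullback of $M_{0}$, and translating this module isomorphism back to operators by Lemma 3.4, Observation 3.5 and Definition 3.6 shows that $L$ is, up to projective equivalence, a weak pullback of $L_{st,A_5}$. Note that $L_{st,A_5}$ is genuinely a standard operator in the sense of \S 3: a direct computation gives $sym^2(d_z^2+a_0)=d_z^3+4a_0 d_z+2a_0'$, which has no $d_z^{2}$ term, so the second symmetric power is automatically normalised.

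I expect the main obstacle to be the operator-level bookkeeping in the final step rather than the module-level argument, which is essentially formal. Concretely, one must check that the gauge term $b$ and the scalar factor $a$ of a weak pullback (Definition 3.6) are transported correctly through $sym^2$, so that the module isomorphism $M\cong P^{*}(M_{0})$ really corresponds to a weak pullback of $L_{st,A_5}$ in the precise sense used for the order-two cases, with the projective-equivalence freedom of Observation 3.5 furnishing exactly the shift needed to renormalise the resulting operator. Securing the order-two Klein theorem in the sharp module form ``$N\cong E\otimes P^{*}(N_{0})$ with $E$ two-torsion'', rather than merely as a statement about operators up to pullback, is the other point that requires care.
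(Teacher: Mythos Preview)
Your argument is correct and is precisely the expansion of the paper's one-line justification of Proposition~5.3 (the content referenced in part~(3)): the paper merely writes ``Indeed this follows from Proposition~5.1 and Klein's theorem for second order equations with group $A_5^{SL_2}$,'' and you have filled in the module-theoretic details. Your determinant computation showing $E^{\otimes 2}={\bf 1}$ is a clean strengthening---though even without it one would have $M\cong E^{\otimes 2}\otimes P^{*}(M_0)$, which already yields equivalence with a weak pullback since tensoring by any one-dimensional module is absorbed into the shift of Definition~3.6---so the reservations you voice at the end are not genuine gaps.
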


\noindent {\it Invariants and evaluation.}\\
Generators for the ring $C[x,y,z]^{A_5}$ are, according to \cite{Ca}, \\
\begin{footnotesize}
\[F_2=x^2+yz,\  F_6 = 8x^4yz-2x^2y^2z^2-x(y^5+z^5)+y^3z^3;
 F_{10} = 320x^6y^2z^2-160x^4y^3z^3+\] \[20x^2y^4z^4+ 6y^5z^5
  -4x(y^5+z^5)(32x^4-20x^2yz+5y^2z^2)+y^{10}+z^{10}; F_{15}=\cdots\]
There is one relation
 \[  F_{15}^2+1728F_6^5-F_{10}^3-720F_2F_6^3F_{10}
  +80F_2^2F_6F_{10}^2-64F_2^3(-F_{10}F_2+5F_6^2)^2=0\]
 \end{footnotesize} 
  
\noindent {\it The evaluation for the $A_5$-invariant curve $Z\subset \mathbb{P}^2$ given by $F_2=0$}.\\  
The graded algebra for $Z/A_5$ is $A=C[F_6,F_{10},F_{15}]/(F_{15}+1728F_6^5-F_{10}^3)$. Generators for the
field $A_{((0))}$ are $\frac{F_{15}^2}{F_6^5}$ and $\frac{F_{10}^3}{F_6^5}$ and there is one relation
$\frac{F_{15}^2}{F_6^5}+1728-\frac{F_{10}^3}{F_6^5}=0$. This leads to the evaluation
$ev:(F_2,F_6,F_{10},F_{15})\mapsto (0,1,t^{1/3}, (t-1728)^{1/2})$ or $\mapsto (0,t^4(t-1728)^3,t^7(t-1728)^5,t^{10}(t-1728)^8)$.

The third order differential operator deduced from this evaluation has three singular points $0,1728,\infty$. One normalizes
this operator such that the singular points are $0,1,\infty$.  Then conjugates the operator with the function $(t-1)^{-1/2}t^{-1/3}$ in order
to obtain the required local exponents. The resulting operator $L_c$ identifies with   $L_{st,A_5}$ of Observations 5.2.\\

The operator $L_c$  has to be  {\em equivalent} to  one of the two operators of  \cite{vdP-U},  $A_5$ with branch type $[2,3,5]$, namely $L_u$, the one with local data  $-1,-1/2,1/2||-2/3, -1/3,0||6/5,9/5,2||\mu=43/225$. Below are the formulas for
$L_c$,  $\tilde{L}_u$ obtained by $t\mapsto 1-t$ from $L_u$ and the verification of the equivalence (i.e., the two operators
define the same differential module). 
\begin{footnotesize}
\[ L_c=d_t^3+\frac{3(2t-1)}{t(t-1)}d_t^2+\frac{6264t^2-6389t+800}{900 t^2(t-1)^2}d_t+\frac{1728t-989}{1800t^2(t-1)^2} \]


\[\tilde{L}_u=d_t^3+\frac{8t-4}{t(t-1)}d_t^2+\frac{12744t^2-13169t+2000}{900t^2(t-1)^2}d_t+\frac{7776t^2-12683t+4457}{1800t^2(t-1)^3}       \]

\[\tilde{L}_u\cdot ((t^2-t)d_t^2 +(\frac{14t}{5}-\frac{4}{3})d_t+\frac{48t-49}{60(t-1)})= 
( (t^2-t)d_t^2+(\frac{54t}{5}-\frac{16}{3})d_t+ \frac{1440t^2-1453t+280}{60t(t-1)} )\cdot       L_c    \]
\end{footnotesize}


 \begin{proposition} Every third order operator $L$ over $C(z)$  with differential Galois group $A_5$ is equivalent to a weak pullback of $L_{st,A_5}$.\end{proposition}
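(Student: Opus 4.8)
The plan is to exploit the symmetric--square structure furnished by Proposition 5.1: I reduce the three--dimensional problem to the already understood two--dimensional one, run the pullback argument there, and transport the conclusion upward by applying $sym^2$. The whole argument is carried out at the level of differential modules, and only in the last line is it rephrased in terms of operators. This is essential, because (as Observations 5.2(2) stresses) the operator $L$ is in general \emph{not} the second symmetric power of an order--two operator, although its module is; working with modules sidesteps this obstruction.

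Let $M\subset \overline{C(z)}$ be the solution module of $L$, with differential Galois group $A_5$. By Proposition 5.1(1) there is a two--dimensional module $N$ over $C(z)$ with Galois group $A_5^{SL_2}$ together with an isomorphism $sym^2N\cong M$. Let $L_N$ be a monic order--two operator with solution module $N$; its solutions are algebraic since $A_5^{SL_2}$ is finite, so Proposition 3.7 applies to it. For $n=2$ the Fano curve carries no information (one takes $Z=\mathbb{P}^1$) and the projective group of $N$ is $A_5\subset {\rm PSL}_2$; hence Proposition 3.7 asserts that $L_N$ is a weak pullback of the standard operator $L_{st,A_5^{SL_2}}$. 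Unwinding Definition 3.6 at the module level, this means there is a pullback map $\mathbb{P}^1_z\to\mathbb{P}^1_s$, with associated pullback functor $\psi^\ast$ from modules over $C(s)$ to modules over $C(z)$, and a one--dimensional module $D_1$ over $C(z)$, such that $N\cong \psi^\ast N_{st}\otimes D_1$, where $N_{st}$ is the solution module of $L_{st,A_5^{SL_2}}$; here the twist $D_1$ is recorded by the term $b$ in Definition 3.6 and the scalar $a$ records the monic normalization.

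I would then apply $sym^2$ to the isomorphism $N\cong\psi^\ast N_{st}\otimes D_1$. Since $\psi^\ast$ is a tensor functor it commutes with symmetric powers, and for a line $D_1$ one has $sym^2(P\otimes D_1)\cong sym^2(P)\otimes D_1^{\otimes 2}$; therefore
\[
M\;\cong\; sym^2N\;\cong\; \psi^\ast\!\bigl(sym^2N_{st}\bigr)\otimes D_1^{\otimes 2}\;=\;\psi^\ast(M_{st})\otimes D_1^{\otimes 2},
\]
where $M_{st}:=sym^2N_{st}$ is by definition the solution module of $L_{st,A_5}=sym^2L_{st,A_5^{SL_2}}$ (Observations 5.2(3)). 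Thus $M$ arises from $M_{st}$ by the \emph{same} pullback $\psi$ followed by a twist with the line $D_1^{\otimes 2}$, which is exactly a weak pullback of $M_{st}$. Translating back through Definition 3.6 and Lemma 3.4, the operator $L$ is equivalent (defines the same module) to an operator $a\cdot\phi(L_{st,A_5})$, i.e.\ to a weak pullback of $L_{st,A_5}$, as claimed.

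The step I expect to be the genuine obstacle is precisely this final translation rather than the formal algebra of symmetric powers. The naive attempt ``pull back $L_N$ and square the operator'' fails, since $L$ need only be \emph{equivalent} to $sym^2L_N$; what saves the argument is that the discrepancy is exactly a twist by a one--dimensional module together with an overall normalization, and these are precisely the free parameters $b$ and $a$ built into the notion of weak pullback. I would therefore keep everything module--theoretic until the last sentence, where equivalence of operators absorbs both the twist $D_1^{\otimes 2}$ and the scalar $a$. A minor point to record is that $A_5$ has two faithful three--dimensional characters $\chi_2,\chi_3$, interchanged by $\zeta_5\mapsto\zeta_5^2$; each is the second symmetric power of one of the two two--dimensional characters of $A_5^{SL_2}$, so the argument applies verbatim to both, with $L_{st,A_5}$ replaced by its Galois conjugate when $L$ acts through $\chi_3$.
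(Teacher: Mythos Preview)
Your proposal is correct and follows exactly the route the paper indicates: the paper's entire proof is the single sentence ``Indeed this follows from Proposition 5.1 and Klein's theorem for second order equations with group $A_5^{SL_2}$,'' and you have simply unpacked that sentence --- using Proposition 3.7 (which for $n=2$ is Klein's theorem) and carrying the conclusion up via $sym^2$ at the level of modules. Your care in working module-theoretically until the last line, and your remark on the two characters $\chi_2,\chi_3$, go slightly beyond what the paper spells out but are in the same spirit.
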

 Indeed this follows from Proposition 5.1 and Klein's theorem for second order equations with group $A_5^{SL_2}$. \\

\noindent {\it Comparing evaluations for $A_5^{SL_2}$ and $A_5$}.\\
  Let the second order operator $L_2$ have differential Galois group $A_5^{SL_2}$ and Picard--Vessiot field $K^+$. 
  Then  the third order operator $L_3:=sym^2(L_2)$ has differential Galois group $A_5$ and Picard--Vessiot field 
  $K=(K^+)^Z$, where $Z$ is the center of $A_5^{SL_2}$. The evaluation for $L_2$ is deduced from a homomorphism
  $h_1:{C}(z)[X,Y]\rightarrow K^+$ which sends $X,Y$ to a basis of solutions of $L_2$. The evaluation for $L_3$ is
  deduced from a homomorphism $h_2: {C}(z)[X_1,X_2,X_3]\rightarrow K$ which sends $X_1,X_2,X_3$ to a basis of
  solutions for $L_3$. We may suppose that $X_1,X_2,X_3$ are mapped  to $h_1(X)^2,h_1(XY),-h_1(Y^2)$. 
  It follows that  $F_2=X_1X_3+X_2^2$  lies in the kernel of the evaluation  $h_2:{C}[X_1,X_2,X_3]\rightarrow K$.
   Hence the evaluation for $L_3$ is induced by an evaluation for $L_2$.\\

\noindent {\it Other evaluations for $A_5\subset {\rm SL}(V)$ with $\dim V=3$}.\\
In constructing an evaluation which does not map $F_2$ to 0, one can start by mapping $F_2,F_6,F_{10}$ to $1, a,b$ with  $a,b\in C(z)$, not both constant. Then $F_{15}$ is mapped to a certain $w$ (depending on $a,b$) with $w^2\in C(z)^*$. This leads to an evaluation 
$h: (F_2,F_6,F_{10},F_{15})\mapsto (w^2,w^6a,w^{10}b,w^{16})$.  This ``general formula'' for an evaluation can be refined. The evaluation
induces an $A_5$-invariant curve $Z\subset \mathbb{P}(V)$ such that the normalisation of $Z/A_5$ has genus 0. The curve need not be 
irreducible!  

Not every $h$ constructed in this way produces a third order differential operator. Moreover one might obtain 
third order operators with a differential Galois group which is a proper subgroup of $A_5$ (see the example below).\\

 If one wants an $A_5$-invariant irreducible curve $Z\subset \mathbb{P}(V)$ of degree $> 2$ and with $\leq 4$ branch points,  
 then one can consider $a=\lambda \in C$ and $b=z$.   The equation for the curve is $-\lambda F_2^3+F_6=0$.  Then $w^2\in C[z]$ is a polynomial of degree 3. The singular points of the third order operator are contained in the union of 
 $\{0,\infty\}$ and the roots of $w^2$. There are at most 4 branch points if the discriminant of $w^2$ is zero. 
 These cases are:\\

\noindent 
  (1).  $\lambda =1$. The curve   $-F_2^3+F_6=0$   is reducible and the equation factors as $x(x+y+z)(\mbox{ degree } 4)$.
 The associated operator  \begin{scriptsize}
  \[ L = d_t^3+\frac{3(7t^2-147t+676)}{2(t-4)(3t-37)(t-8)}d_t^2+\frac{3(149t^2-3367t+13584)}{100(3t-37)(t-8)(t-4)^2}d_t-
  \frac{3(t-29)}{200(3t-37)(t-8)(t-4)^2)}\]
   \end{scriptsize}
   factors. The two right hand factors are $L_2=d_t^2+\frac{t-6}{t^2-12t+32}d_t-\frac{1}{100(t^2-12t+32)}$ and 
   $L_1=d_t+\frac{1}{2(t-4)}$. A basis of solutions for $L_2$ is  $(t-6+\sqrt{t^2-12t+32})^{1/10}$, $(t-6+\sqrt{t^2-12t+32})^{-1/10}$
   and the differential Galois group is the dihedral group $D_{10}$ (of order 20).
   It can be seen that the solution $\sqrt{t-4}$ of $L_1$ belongs to the Picard--Vessiot field $C(t,(t-6+\sqrt{t^2-12t+32})^{1/10})$
   of $L_2$. We conclude that the differential Galois group of $L$ is the subgroup $D_{10}$ of $A_5$. \\
   
 \noindent  (2). $\lambda =0$. The curve $F_6=0$ has genus 4 and is a Galois covering of $\mathbb{P}^1_z$ with group $A_5$ ramified over 
the points $0,-64,\infty$. The corresponding order three differential equation
\[ d_t^3+ \frac{7t+256}{2t(t+64)}d_t^2+\frac{149t+1024}{100t^2(t+64)}d_t-\frac{1}{200t^2(t+64)}.\]
has group $A_5$.\\

\noindent (3).  $\lambda =\frac{32}{27}$. We give some details for this interesting example.

\noindent 
The curve $-\frac{32}{27}F_2^3+F_6=0$ has genus 0 and has 10 singular points (all over the cyclotomic field $\mathbb{Q}(\zeta_5)$). The curve is parametrized by 
 $[-5s^3:s^6+3s:3s^5-1]$ and has function field $C(s)$. The group $A_5$ acts on this field and moreover permutes the set of 10 double points.
   $C(s)^{A_5}=C(z)$ where $z$ is equal to  
\begin{footnotesize} 
\[ (s^{60}+2388s^{55}+326394s^{50}-8825700s^{45}+117672975s^{40}+83075976s^{35}+380773868s^{30}\]
\[ -83075976s^{25}+117672975s^{20}+ 8825700s^{15}+326394s^{10}-2388s^5+1)\]
\[/288(s^5(s^2+s-1)^5(s^4+2s^3+4s^2+3s+1)^5(s^4-3s^3+4s^2-2s+1)^5).\]  
\end{footnotesize}
The evaluation defines the following third order differential operator 
\begin{scriptsize}
\[ d_z^3+\dfrac{81(567z-4864)}{2(81z-1024)(81z-448)}d_z^2+\dfrac{19683(4023z-46592)}{100(81z-448)(81z-1024)^2}d_z-
\dfrac{531441}{200(81z-448)(81z-1024)^2}\]
\end{scriptsize}
 that has Picard--Vessiot field $C(s)$ and a basis of solutions $(X_1,X_2,X_3)\mapsto (-5s^3, (s^6+3s), (3s^5-1)) \in C(s)^3$. 
The operator $L$ is equivalent to the standard one, but is not itself a second symmetric power.  Indeed, because of the degrees in $s$, there
 is no quadratic relation over  ${C}$ between the three solutions.  Further ${C}(s)\supset {C}(z)$
  has three branch points namely $z=\frac{448}{81}, \frac{1024}{81}, \infty$. The ramification type must be $[2,3,5]$ since the genus is 0.

\begin{footnotesize}

\end{footnotesize}


\begin{thebibliography}{OKSO555}

\bibitem[B-D]{B-D} F.~Baldassarri and B.~Dwork, - {\it Differential equations with algbraic solutions}, 
American Journal of Mathematics {\bf 101}, (1979), 42-76  


\bibitem[Ber]{Ber} M.~ Berkenbosch,  - {\it  Algorithms and moduli spaces for differential equations}. Groupes de Galois arithm\'etiques et diff\'erentiels, 1--38, S\'emin. Congr., 13, Soc. Math. France, Paris, 2006.





\bibitem[B]{B} F.~Beukers, - {\it The maximal differential ideal is generated by its invariants},	
Indag. Mathem., N.S., 11 (I), 13-18, 2000


\bibitem[B-H]{B-H} F.~Beukers and G.J.~Heckman, - {\it Monodromy for the hypergeometric function $_nF_{n-1}$},  Invent. Math. 95 (1989), no. 2, 325--354. 



\bibitem[Ca]{Ca} J.~Carrasco Serrano, - {\it Finite subgroups of $SL(2,\mathbb{C})$ and $SL(3,\mathbb{C})$},
Thesis University of Warwick, May 13, 2014.


\bibitem[C]{C} E.~Compoint, - {\it Differential equations and algebraic relations},  
J. Symb. Comput. 25, 705--725, 1998.


\bibitem[C-S]{C-S}  E.~Compoint and M.F.~Singer,  - {\it Calculating Galois groups of completely reducible differential
operators},  J. Symb. Comput.28, 473--494, 1999. 



\bibitem[Fa]{Fa} G.~Fano, -  { \it  \"Uber lineare homogene
Differentialgleichungen mit algebraischen Relationen zwischen den
Fundamentall\"osungen},  Math. Ann., 53: 493-590, 1900.



\bibitem[Ho]{Ho} M.~van~Hoeij, - {\it The Minimum Polynomial of an Algebraic Solution of Abels' Problem},  preprint.

 
\bibitem[H-W]{H-W} M.~van~Hoeij  and J.-A.~Weil, - {\it An algorithm for computing invariants of differential galois
groups}, J. Pure and Applied Algebra {\bf 117 \&118}, 353-379.

\bibitem[H]{H} A.~Hurwitz, - {\it \"Uber einige besondere homogene lineare Differentialgleichungen},  Math.Ann. 26(1), 1896, 117-126.

\bibitem[Ka]{Ka} M.~Kato, - {\it Minimal Schwarz maps of $_3F_2$ with finite irreducible monodromy groups}, Kyushu J.Math Vol {\bf 60}, 2006, pp. 27--46

\bibitem[Ko]{Ko} J.~Kovacic, - {\it An algorithm for solving second order linear homogeneous differential equations},
J. Symb. Comput. 2, 3--43, 1986.

\bibitem[Ma-Ma]{Ma-Ma} G.~Malle and B.H.~Matzat, - {\it Inverse Galois Theory}, Springer Monographs in Mathematics, 1999. 


\bibitem[N-vdP]{N-vdP} K.A.~ Nguyen and M.~van~der~Put, - {\it Solving linear differential equations}, Pure Appl. Math. Q. 6 (2010), no. 1, Special Issue: In honor of John Tate. Part 2, 173--208.



\bibitem[vdP-S]{vdP-S} M.~van~der~Put and M.F.~Singer, - {\it Galois Theory of Linear Differential Equations},
 Grundlehren der math.\  Wissenschaften. Vol.~328, Springer Verlag, New York, 2003.

\bibitem[vdP-U]{vdP-U} M.~van~der~Put and F.~Ulmer,  -  {\it Differential equations and finite groups}, J.Algebra 226, 920--966, 2000. 


\bibitem[Ro]{Ro} D.~Rotillon, - {\it Deux contre-exemples \`a une conjecture de R. Stanley sur les anneaux d'invariants intersection compl\`etes},
Comptes rendus de l'Acad\'emie des Sciences S\'erie I, Tome 292 (9 f\'evrier 1981)  p. 345-348. 

\bibitem[S1]{S1} C.~Sanabria Malag\'on,  - {\it Reversible linear differential equations}, Journal of Algebra (2011) 248-268  

\bibitem[S2]{S2}  C.~Sanabria  Malag\'on, -  
{\it On linear differential equations with  reductive Galois group}, Journal of Algebra  (2014) 63-101

\bibitem[S3]{S3}   C.~Sanabria Malag\'on, -   {\it Schwarz maps of algebraic linear ordinary differential equations}, J. Differential Equations 263 (2017), no. 11, 7123--7140. 



\bibitem[S-U]{S-U} M.F.~Singer and F.~Ulmer,  - {\it Liouvillian and algebraic solutions of second and third order 
linear differential equations},  J. Symb. Comput, {\bf 16}, No 1, 37--73, (1993)
\end{thebibliography}
\end{document}